\newtheorem{theorem}{Theorem}[section]
\newtheorem{definition}[theorem]{Definition}
\newtheorem{proposition}[theorem]{Proposition}
\newtheorem{lemma}[theorem]{Lemma}
\newtheorem{remark}[theorem]{Remark}
\newtheorem{example}{Example}
\numberwithin{equation}{section}
\begin{document}

\title{\Large \bf Mean-field backward stochastic differential equations driven by fractional Brownian motion }

\author
{\textbf{Jiaqiang Wen}$^{1,3}$, \textbf{Yufeng Shi}$^{1,2,}$\thanks{Corresponding author.
  E-mail addresses: jqwen59@gmail.com (J. Wen), yfshi@sdu.edu.cn (Y. Shi)} \\
\normalsize{$^{1}$Institute for Financial Studies, Shandong University, Jinan 250100, China}\\
\normalsize{$^{2}$School of Statistics, Shandong University of Finance and Economics, Jinan 250014, China}\\
\normalsize{$^{3}${Department of Mathematics, University of Central Florida, Orlando, FL 32816, USA}}\\
}

\date{}

\renewcommand{\thefootnote}{\fnsymbol{footnote}}

\footnotetext[0]{This work is supported by
     NNSF of China (Grant Nos. 11371226, 11526205, 11626247),
     the Foundation for Innovative Research Groups of National Natural Science Foundation  of China (Grant No. 11221061) and the 111 Project (Grant No. B12023).}

\maketitle

\begin{abstract}
In this paper, we focus on the mean-field backward stochastic differential equations (BSDEs) driven by
a fractional Brownian motion with Hurst parameter $H>\frac{1}{2}$.
First, the existence and uniqueness of these equations are established under Lipschitz condition.
Then, a comparison theorem for such mean-field BSDEs is obtained.
Finally, as an application, we connect this mean-field BSDE with a nonlocal partial differential equation (PDE).
\end{abstract}

\textbf{Keywords}: mean-field backward stochastic differential equation;
 fractional Brownian motion;
  partial differential equation

\textbf{2010 Mathematics Subject Classification}: 60H10, 60H20, 60G22

\section{Introduction}

A centered Gaussian process $B^{H}=\{ B^{H}_{t},t\geq 0 \}$ is called a
fractional Brownian motion (fBm, for short) with Hurst parameter $H \in (0, 1)$ if its covariance is
\begin{equation*}
 E(B^{H}_{t}B^{H}_{s}) = \frac{1}{2} (t^{2H} + s^{2H} - |t-s|^{2H}), \ \ t,s \geq 0.
\end{equation*}
When $H=\frac{1}{2}$, this process becomes a classical Brownian motion.
For $H>\frac{1}{2}$, $B^{H}_{t}$ exhibits the property of long range dependence,
which makes fBm an important driving noise in many fields such as finance, telecommunication networks, and physics.

In 1990, the nonlinear backward stochastic differential equation (BSDE, for short) was introduced by
 Pardoux and Peng \cite{Peng}.
In the next two decades, it has been widely used in different fields of mathematical finance \cite{Peng2}, stochastic control \cite{Yong5}, and partial differential equations (PDEs, for short) \cite{Peng92}.
Recently, Buckdahn et al. \cite{Buckdahn} and Buckdahn, Li and Peng \cite{Buckdahn2} introduced the so-called mean-field BSDEs,
owing to that mathematical mean-field approaches play an important role in many fields,
such as  Economics, Physics and Game Theory
(see Lasry and Lions \cite{Lasry}, Buckdahn et al. \cite{Buckdahn2017} and the papers therein).
Furthermore, BSDEs driven by fBm, also known as the fractional BSDEs,
with Hurst parameter $H>\frac{1}{2}$ were studied by Hu and Peng \cite{Hu}.
Then Maticiuc and Nie \cite{Maticiuc} obtained some general results of the fractional BSDEs through a rigorous approach.
 Buckdahn and Jing \cite{Buckdahn3} studied the fractional mean-field stochastic differential equations (SDEs, for short) with $H>\frac{1}{2}$ and a stochastic control problem.
Some other recent developments of fractional BSDEs  can be found in
Bender \cite{Bender}, Borkowska \cite{Borkowska}, Maticiuc and Nie \cite{Maticiuc},
 Wen and Shi \cite{Wen}, etc.

Motivated by the above works, in this paper,
the purpose is to investigate the following fractional mean-field BSDEs:
\begin{equation}\label{0}
  Y_{t} = \xi + \int_t^T E'[f(s,\eta_{s},Y'_{s},Z'_{s},Y_{s},Z_{s})] ds - \int_t^T Z_{s} dB_{s}^{H}, \ \ 0\leq t\leq T,
\end{equation}
where Hurst parameter $H>\frac{1}{2}$, and
the stochastic integral in (\ref{0}) is the divergence type integral
(see Decreusefond and \"{U}st\"{u}nel \cite{Decreusefond}, and  Nualart \cite{Nualart}).
First, two different methods are proposed to prove the existence and uniqueness of Eq. (\ref{0}).
Interestingly,  the conditions required by the first method are weaker, however, the second method is more convenient.
Then, for its wide applications to BSDEs, a comparison theorem of such mean-field BSDEs is obtained.
In addition, we connect this mean-field BSDE with a nonlocal PDE.
It should be pointed out that motivated by Biagini et al. \cite{Biagini02} and Han, Hu and Song \cite{Han13},
there may exist the dual relation between mean-field SDEs and mean-field BSDEs in the circumstance of fractional calculus.
Moreover, similar to mean-field SDEs driven by fBm introduced in \cite{Buckdahn3},
our mean-field BSDEs can also be applied to the field of stochastic optimal controls.
About these topics, some further studies in the coming future researches will be given.

We organize this article as follows.
Some preliminaries about fBm are presented in Section 2.
The existence and uniqueness of the fractional mean-field BSDEs are proved by two different methods in Section 3.
We derive a comparison theorem for such mean-field BSDE in Section 4,
and connect this mean-field BSDE with a nonlocal PDE in Section 5.

\section{Preliminaries}

We recall, in this section, some basic results of fractional Brownian motion.
For a deeper discussion, the readers may refer to the articles such as Decreusefond and \"{U}st\"{u}nel \cite{Decreusefond},
Hu \cite{Hu3} and  Nualart \cite{Nualart}.

Assume $B^{H}=\{ B^{H}_{t},t\geq 0 \}$ is a fBm defined on a complete probability space $(\Omega,\mathcal{F},P)$,
and the filtration $\mathcal{F}$ is generated by $B^{H}$.
Let $H >\frac{1}{2}$ throughout this paper.
Moreover, we denote $\phi(x) = H(2H - 1)|x|^{2H-2},$ where $x \in \mathbb{R}$,
and suppose $\xi$ and $\eta$ are two continuous functions defined in $[0,T]$.
Define
\begin{equation}\label{11}
  \langle \xi,\eta \rangle_{T} = \int_0^T \int_0^T \phi(u-v) \xi_{u} \eta_{v} dudv, \ \
  and \ \   \| \xi \|_{T}^{2} =  \langle \xi,\xi  \rangle_{T}.
\end{equation}
Then, $\langle \xi,\eta \rangle_{T}$ is a Hilbert scalar product.
Under this scalar product, we denote by $\mathcal{H}$ the completion of the continuous functions.
Besides, denote by $\mathcal{P}_{T}$ the set of all polynomials of fBm in $[0,T]$, i.e.,
every element of $\mathcal{P}_{T}$ is of the form
\begin{equation*}
  \Phi(\omega) = h \left(\int_0^T \xi_{1}(t) dB_{t}^{H},...,\int_0^T \xi_{n}(t) dB_{t}^{H} \right),
\end{equation*}
where $h$ is a polynomial function and $\xi_{i}\in\mathcal{H}, i=1,2,...,n$.
In addition,
Malliavin derivative operator $D_{s}^{H}$ of $\Phi\in \mathcal{P}_{T}$ is defined by:
\begin{equation*}
  D_{s}^{H}\Phi = \sum\limits_{i=1}^{n} \frac{\partial h}{\partial x_{i}}
               \left(\int_0^T \xi_{1}(t) dB_{t}^{H},...,\int_0^T \xi_{n}(t) dB_{t}^{H} \right)\xi_{i}(s), \ \ s\in [0,T].
\end{equation*}
Since the derivative operator
  $D^{H}:L^{2}(\Omega,\mathcal{F}, P) \rightarrow (\Omega,\mathcal{F}, \mathcal{H})$ is closable,
one can denote by $\mathbb{D}^{1,2}$ the completion of $\mathcal{P}_{T}$ under the following norm
$$\| \Phi \|^{2}_{1,2} = E|\Phi|^{2} + E\|D^{H}_{s} \Phi\|^{2}_{T}.$$
Furthermore, we introduce the following derivative
\begin{equation}\label{12}
  \mathbb{D}_{t}^{H}\Phi = \int_0^T \phi(t-s) D_{s}^{H}\Phi ds, \ \ t\in[0,T].
\end{equation}
Now, let us consider the adjoint operator of Malliavin derivative operator $D^{H}$.
We call this operator the divergence operator, which represents the divergence type integral and
 is denoted by $\delta(\cdot)$.

\begin{definition}
A process $u\in L^{2}(\Omega\times[0,T];\mathcal{H})$ is said to belongs to the domain $Dom(\delta)$,
if there exists $\delta(u)\in L^{2}(\Omega,\mathcal{F},P)$ satisfying the following duality relationship:
\begin{equation*}\label{}
  E(\Phi\delta(u))=E(\langle D^{H}_{\cdot} \Phi,u \rangle_{T}), \ \ for \ every \  \Phi\in\mathcal{P}_{T}.
\end{equation*}
Moreover, if $u\in Dom(\delta)$, the divergence type integral of $u$ w.r.t. $B^{H}$ is defined by putting
$\int_0^T u_{s} d B^{H}_{s}=: \delta(u)$.
\end{definition}
It should be pointed out that, in this paper, unless otherwise specified,
 the $d B^{H}$-integral represents the divergence type integral.

\begin{proposition}[Hu \cite{Hu3}, Proposition 6.25]\label{2}
Let $\mathbb{L}^{1,2}_{H}$ be the space of all processes $F : \Omega\times[0,T] \rightarrow \mathcal{H}$ satisfying
$ E \left( \| F \|_{T}^{2} + \int_0^T \int_0^T |\mathbb{D}_{s}^{H}F_{t}|^{2} dsdt \right) < \infty.$
Then, if $F \in \mathbb{L}^{1,2}_{H}$, the divergence type integral
$\int_0^T F_{s} dB_{s}^{H}$ exists in $L^{2}(\Omega,\mathcal{F}, P)$, and
\begin{equation*}
  E \left( \int_0^T F_{s} dB_{s}^{H} \right) = 0; \ \
   E \left( \int_0^T F_{s} dB_{s}^{H} \right)^{2}
 =E \left( \| F \|_{T}^{2} + \int_0^T \int_0^T \mathbb{D}_{s}^{H}F_{t} \mathbb{D}_{t}^{H}F_{s} dsdt \right).
\end{equation*}
\end{proposition}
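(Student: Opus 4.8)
The statement is classical, and I would prove it by the standard two‑step scheme for Skorokhod‑type integrals: establish everything first on elementary processes, then extend by density. Call $F$ \emph{elementary} if $F_t=\sum_{i=1}^n\Phi_i\,\xi_i(t)$ with $\Phi_i\in\mathcal{P}_T$ and each $\xi_i$ continuous on $[0,T]$. For such $F$ one has the explicit localization formula
\begin{equation*}
 \delta(F)=\sum_{i=1}^n\Big(\Phi_i\int_0^T\xi_i(t)\,dB^H_t-\langle D^H_\cdot\Phi_i,\xi_i\rangle_T\Big),
\end{equation*}
which is verified directly against the defining duality $E(\Psi\delta(F))=E(\langle D^H_\cdot\Psi,F\rangle_T)$ by combining the Leibniz rule $D^H(\Psi\Phi_i)=\Phi_i D^H\Psi+\Psi D^H\Phi_i$ with the first‑chaos duality $E\big(\Psi\int_0^T\xi_i\,dB^H\big)=E\langle D^H_\cdot\Psi,\xi_i\rangle_T$, valid for deterministic $\xi_i\in\mathcal{H}$. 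In particular $\delta(F)\in L^2(\Omega,\mathcal{F},P)$ for elementary $F$, and inserting $\Psi\equiv 1$ (so $D^H\Psi=0$) in the duality gives $E(\delta(F))=0$.

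Next I would compute the second moment on elementary processes, which is the technical heart of the argument. Starting from the explicit formula, expanding $\delta(F)^2$, and using the first‑chaos isometry $E\big(\int\xi_i\,dB^H\int\xi_j\,dB^H\big)=\langle\xi_i,\xi_j\rangle_T$, one further application of the Leibniz rule, the Malliavin integration‑by‑parts formula, and the elementary identity $\langle D^H_\cdot\Phi,\xi\rangle_T=\int_0^T(\mathbb{D}^H_v\Phi)\,\xi_v\,dv$ (which is just definition (\ref{12}) together with (\ref{11}) and the evenness of $\phi$), the terms collapse to
\begin{equation*}
 E\big(\delta(F)^2\big)=E\Big(\|F\|_T^2+\int_0^T\int_0^T\mathbb{D}^H_sF_t\,\mathbb{D}^H_tF_s\,ds\,dt\Big);
\end{equation*}
concretely, $\mathbb{D}^H_sF_t=\sum_i\xi_i(t)\,\mathbb{D}^H_s\Phi_i$, and the cross terms reassemble exactly as $\sum_{i,j}\langle D^H_\cdot\Phi_j,\xi_i\rangle_T\langle D^H_\cdot\Phi_i,\xi_j\rangle_T=\int_0^T\int_0^T\mathbb{D}^H_sF_t\,\mathbb{D}^H_tF_s\,ds\,dt$. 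The only care needed is to keep straight which slot of each $\mathcal{H}$‑inner product carries which index, and to recognize that it is this (non‑symmetrized) double integral, not its symmetrization, that appears.

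Finally I would pass to the limit. By Cauchy--Schwarz, $\big|\int_0^T\int_0^T\mathbb{D}^H_sF_t\,\mathbb{D}^H_tF_s\,ds\,dt\big|\le\int_0^T\int_0^T|\mathbb{D}^H_sF_t|^2\,ds\,dt$, so the identity above yields $E(\delta(F)^2)\le E\big(\|F\|_T^2+\int_0^T\int_0^T|\mathbb{D}^H_sF_t|^2\,ds\,dt\big)$; that is, $\delta$ is a contraction from the elementary processes, equipped with the $\mathbb{L}^{1,2}_H$‑norm, into $L^2(\Omega,\mathcal{F},P)$. Since the elementary processes are dense in $\mathbb{L}^{1,2}_H$, $\delta$ extends uniquely and continuously, and the extension still obeys the duality relation (for $\Phi\in\mathcal{P}_T$, pass to the limit in $E(\Phi\delta(F_n))=E\langle D^H_\cdot\Phi,F_n\rangle_T$), so $\int_0^T F_s\,dB^H_s:=\delta(F)$ is well defined in $L^2$ for every $F\in\mathbb{L}^{1,2}_H$; the zero‑mean identity survives because $\delta(F_n)\to\delta(F)$ in $L^2$, and the second‑moment identity survives because $F\mapsto E\|F\|_T^2$ and $F\mapsto E\int_0^T\int_0^T\mathbb{D}^H_sF_t\,\mathbb{D}^H_tF_s\,ds\,dt$ are continuous for the $\mathbb{L}^{1,2}_H$‑norm. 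I expect the genuine obstacle, beyond the bookkeeping in the moment computation, to be the density step: since for fBm the space $\mathcal{H}$ consists of distributions rather than functions, one works inside the subspace $|\mathcal{H}|$ of functions $f$ with $\int_0^T\int_0^T\phi(u-v)|f_u||f_v|\,du\,dv<\infty$ and must argue that simple (hence continuous) integrands are dense there, which is routine for $H>\frac12$ but does use the explicit kernel $\phi$. As a fallback one can bypass the elementary‑process computation by expanding $F$ in Wiener chaos, $F_t=\sum_{n\ge0}I_n(f_n(\cdot,t))$, using $\delta(F)=\sum_{n\ge0}I_{n+1}(\widetilde{f}_n)$, and reading both identities off from the orthogonality and the norms of the multiple integrals.
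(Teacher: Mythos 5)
This proposition is not proved in the paper at all: it is imported verbatim, with attribution, from Hu's memoir (Proposition 6.25), so there is no in-paper argument to compare against. Your elementary-process-plus-density scheme, with the explicit formula $\delta(\Phi_i\xi_i)=\Phi_i\int_0^T\xi_i\,dB^H-\langle D^H_\cdot\Phi_i,\xi_i\rangle_T$, the second-moment computation via duality and the identity $\langle D^H_\cdot\Phi,\xi\rangle_T=\int_0^T(\mathbb{D}^H_v\Phi)\xi_v\,dv$, the Cauchy--Schwarz bound showing $\delta$ is bounded in the $\mathbb{L}^{1,2}_H$-norm, and the passage to the limit in both identities, is the standard and correct route to this result, essentially the proof found in the cited literature (Hu; Nualart), and you have correctly flagged the one genuinely delicate point, the density of elementary processes in $\mathbb{L}^{1,2}_H$ given that $\mathcal{H}$ contains distributions for $H>\tfrac12$.
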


\begin{proposition}[Hu \cite{Hu3}, Theorem 10.3]\label{4}
Suppose  $g$ and  $f$ are two deterministic continuous functions. Let
\begin{equation*}
  X_{t} = X_{0} + \int_0^t g_{s} ds + \int_0^t f_{s} dB_{s}^{H}, \ \ t\in [0,T],
\end{equation*}
where $X_{0}$ is a constant. Then, if $F \in C^{1,2}([0, T ] \times \mathbb{R})$, one has
\begin{align*}
 F(t,X_{t})=& F(0,X_{0})+ \int_0^t \frac{\partial F}{\partial s}(s,X_{s}) ds
  + \int_0^t \frac{\partial F}{\partial x}(s,X_{s})g_{s} ds \\
 & + \int_0^t \frac{\partial F}{\partial x}(s,X_{s}) f_{s} dB_{s}^{H}
+ \frac{1}{2}\int_0^t \frac{\partial^{2} F}{\partial x^{2}}(s,X_{s}) \bigg[\frac{d}{ds} \| f \|_{s}^{2}\bigg] ds,
  \ \  t\in [0,T].
\end{align*}
\end{proposition}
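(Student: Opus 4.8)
The plan is to prove the identity first for a class of functions $F$ on which the fractional stochastic calculus is explicit, and then to pass to a general $F\in C^{1,2}$ by approximation. Since $f$ is deterministic and continuous, the map $t\mapsto\|f\|_t^2=\int_0^t\int_0^t\phi(u-v)f_uf_v\,du\,dv$ is continuously differentiable with $\frac{d}{dt}\|f\|_t^2=2f_t\int_0^t\phi(t-v)f_v\,dv$, so every integral on the right-hand side is well defined, and both sides of the asserted equality are continuous in $t$; hence it suffices to establish it for each fixed $t\in[0,T]$ on a class of $F$ whose closed span, together with its first and second derivatives, approximates all of $C^{1,2}$.

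First I would treat the exponential case $F(t,x)=e^{\lambda x}$, $\lambda\in\mathbb{R}$. Consider the Wick exponential $u_t:=\exp\!\big(\lambda\int_0^t f_s\,dB_s^H-\tfrac12\lambda^2\|f\|_t^2\big)$. Using the duality definition of the divergence operator $\delta$ together with Proposition \ref{2}, and approximating $\int_0^\cdot f_s\,dB_s^H$ by Riemann sums over partitions $0=t_0<\cdots<t_m=t$ --- continuity of $f$ controls the partition error, while the long-range cross terms $\int_0^{t_i}\!\int_{t_i}^{t_{i+1}}\phi(u-v)f_uf_v\,du\,dv$ are absorbed exactly into the increments of $\tfrac12\|f\|^2$, the factor $2$ coming from the symmetry of the double integral --- one shows $u_\cdot f_\cdot\in\mathbb{L}^{1,2}_H$ and $u_t=1+\lambda\int_0^t u_sf_s\,dB_s^H$; in words, the Wick exponential carries no It\^{o} correction in the divergence sense. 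Now, $X_0$ being a constant, write $F(t,X_t)=e^{\lambda X_t}=e^{\lambda X_0}\,\alpha(t)\,u_t$ with $\alpha(t):=\exp\!\big(\lambda\int_0^t g_s\,ds+\tfrac12\lambda^2\|f\|_t^2\big)$ deterministic and $C^1$. Applying the product rule for the divergence integral against the deterministic finite-variation factor $e^{\lambda X_0}\alpha(t)$ gives $F(t,X_t)=F(0,X_0)+\lambda\int_0^t F(s,X_s)g_s\,ds+\lambda\int_0^t F(s,X_s)f_s\,dB_s^H+\tfrac12\lambda^2\int_0^t F(s,X_s)\big[\tfrac{d}{ds}\|f\|_s^2\big]ds$, which is the claimed formula since $\partial_tF=0$, $\partial_xF=\lambda F$ and $\partial_{xx}F=\lambda^2 F$. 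Differentiating this identity in $\lambda$ at $\lambda=0$ --- legitimate because the $L^2$-bounds from Proposition \ref{2} are locally uniform in $\lambda$ --- yields the formula for each monomial $x\mapsto x^n$, hence for all polynomials in $x$; polynomial dependence on $t$ is then immediate, the coefficients in $t$ being deterministic and of finite variation, so the formula holds for every polynomial $F(t,x)$.

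Next I would remove the restriction. Given $F\in C^{1,2}([0,T]\times\mathbb{R})$: since $X$ has continuous paths it is a.s.\ bounded on $[0,t]$, so after localization (equivalently, after multiplying $F$ by a smooth cutoff in $x$) we may assume $F,\partial_tF,\partial_xF,\partial_{xx}F$ are bounded and $F$ has compact $x$-support. By the Weierstrass theorem pick polynomials $F_n$ such that $F_n,\partial_tF_n,\partial_xF_n,\partial_{xx}F_n$ converge uniformly on the relevant compact set to $F,\partial_tF,\partial_xF,\partial_{xx}F$. Apply the polynomial case to each $F_n$ and let $n\to\infty$: the three Lebesgue--Stieltjes integrals converge by dominated convergence, and $\int_0^t\partial_xF_n(s,X_s)f_s\,dB_s^H\to\int_0^t\partial_xF(s,X_s)f_s\,dB_s^H$ in $L^2$ by the isometry of Proposition \ref{2}, because the Malliavin derivative of $\partial_xF_n(r,X_r)$ equals $\partial_{xx}F_n(r,X_r)$ times the (deterministic, bounded) Malliavin derivative of $X_r$ and is thus controlled by the uniform convergence of $\partial_{xx}F_n$. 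Letting the localization parameter tend to infinity finishes the proof.

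The main obstacle is the exponential step, namely proving $du_t=\lambda u_tf_t\,dB_t^H$, i.e.\ that the Wick exponential has no correction term under divergence integration. Making the Riemann-sum convergence rigorous in $\mathbb{L}^{1,2}_H$ requires the continuity of $f$ and careful control of $\|f\mathbf{1}_{[t_i,t_{i+1}]}\|^2$ and of the cross terms $\int_0^{t_i}\!\int_{t_i}^{t_{i+1}}\phi(u-v)f_uf_v\,du\,dv$; these are exactly what produce the term $\tfrac12\big[\tfrac{d}{ds}\|f\|_s^2\big]$ and have no counterpart in any pathwise quadratic variation --- indeed for $H>\tfrac12$ the sums $\sum_i(X_{t_{i+1}}-X_{t_i})^2$ vanish, so a direct second-order Taylor-expansion argument cannot work without re-introducing the same divergence/Wick bookkeeping, which is why I route the proof through the exponential.
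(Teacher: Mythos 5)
First, a point of comparison: the paper does not prove Proposition \ref{4} at all --- it is imported verbatim from Hu \cite{Hu3} (Theorem 10.3) and used as a black box --- so there is no internal proof to measure your argument against; the relevant benchmark is the proof in the cited memoir, which does proceed, as you propose, through Wick-exponential functionals and an approximation argument rather than through a pathwise Taylor expansion. Your outline therefore has the right shape, and your closing remark --- that for $H>\frac12$ the quadratic-variation sums vanish, so the correction term $\frac{1}{2}F_{xx}\,\frac{d}{ds}\|f\|_s^2$ cannot arise from a pathwise second-order argument --- is exactly the right diagnosis of why the divergence/Wick machinery is unavoidable.

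As a proof, however, the sketch has three soft spots. (i) The identity $u_t=1+\lambda\int_0^t u_s f_s\,dB_s^H$ for the Wick exponential is not a small lemma on the way to the theorem; it essentially \emph{is} the theorem for exponential $F$, and the Riemann-sum bookkeeping you gesture at is the hard part. The checkable route is to compute $D_s^H u_t=\lambda f_s\mathbf{1}_{[0,t]}(s)\,u_t$, verify $u\,f\,\mathbf{1}_{[0,t]}\in\mathbb{L}^{1,2}_{H}$, and confirm the duality relation defining $\delta$ (or argue via the chaos expansion / S-transform), rather than absorbing the cross terms $\int\!\!\int\phi(u-v)f_uf_v\,du\,dv$ by hand. (ii) Differentiating the exponential identity in $\lambda$ under the divergence sign requires the closedness of $\delta$ together with bounds in $\mathbb{L}^{1,2}_{H}$ that are locally uniform in $\lambda$, not merely the $L^2$ bounds of Proposition \ref{2}; this is true here because $f$ is deterministic and everything is Gaussian, but it has to be argued. (iii) The localization step is the most delicate: a divergence-type integral is not a pathwise object, so ``multiplying $F$ by a smooth cutoff in $x$'' only localizes the formula if you invoke the locality of $\delta$ on $\mathbb{L}^{1,2}$ (see Nualart \cite{Nualart}), and the final passage back to unbounded $F$ silently uses integrability of $F(t,X_t)$ and its derivatives against the Gaussian law of $X_t$ --- which is precisely why the statement in \cite{Hu3} carries explicit growth hypotheses on $F$ that are absent from the bare $C^{1,2}$ formulation quoted in the paper. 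None of these gaps is fatal, but each needs an actual argument before the outline becomes a proof.
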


\begin{proposition}[Hu \cite{Hu3}, Theorem 11.1]\label{3}
 For $i = 1, 2$, let $g_{i}$ and $f_{i}$ be two real valued processes satisfying
 $E \int_0^T (|g_{i}(s)|^{2} + |f_{i}(s)|^{2}) ds < \infty$.
Moreover, assume that $D^{H}_{t}f_{i}(s)$ is continuously differentiable in its arguments
 $(s,t)\in [0,T]^{2}$ for almost every $\omega \in \Omega$, and
$E \int_0^T \int_0^T |\mathbb{D}_{t}^{H}f_{i}(s)|^{2} dsdt < \infty$.
Denote
\begin{equation*}
  X_{i}(t) = \int_0^t g_{i}(s) ds + \int_0^t f_{i}(s) dB_{s}^{H}, \ \ t\in [0,T].
\end{equation*}
Then
\begin{equation*}
\begin{split}
   X_{1}(t)X_{2}(t) =& \int_0^t X_{1}(s)g_{2}(s) ds + \int_0^t X_{1}(s)f_{2}(s) dB_{s}^{H}
                      +\int_0^t X_{2}(s)g_{1}(s) ds \\
                    &+ \int_0^t X_{2}(s)f_{1}(s) dB_{s}^{H} +\int_0^t \mathbb{D}_{s}^{H}X_{1}(s)g_{2}(s) ds + \int_0^t \mathbb{D}_{s}^{H}X_{2}(s)g_{1}(s) ds.
\end{split}
\end{equation*}
\end{proposition}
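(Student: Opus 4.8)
The plan is to treat Proposition~\ref{3} as the fractional analogue of the Itô product rule and to prove it in two stages: first for a dense class of ``elementary'' integrands by an explicit telescoping computation, and then in general by an $L^{2}$‑approximation resting on Proposition~\ref{2}.

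\textbf{Step 1 (elementary integrands).} I would first assume that $g_{i}$ and $f_{i}$ are step processes in the time variable whose values are smooth cylindrical functionals of $B^{H}$; the linear span of such processes is dense in $\mathbb{L}^{1,2}_{H}$, and for them all the objects below are plainly well defined. Fixing $t$ and a partition $\pi:0=s_{0}<\cdots<s_{n}=t$, I telescope
\begin{equation*}
X_{1}(t)X_{2}(t)=\sum_{k}\Big[X_{1}(s_{k})\,\Delta_{k}X_{2}+X_{2}(s_{k})\,\Delta_{k}X_{1}+\Delta_{k}X_{1}\,\Delta_{k}X_{2}\Big],\qquad \Delta_{k}X_{i}:=X_{i}(s_{k+1})-X_{i}(s_{k}),
\end{equation*}
and split each increment as $\Delta_{k}X_{i}=\int_{s_{k}}^{s_{k+1}}g_{i}(r)\,dr+\int_{s_{k}}^{s_{k+1}}f_{i}(r)\,dB^{H}_{r}$. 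The $dr$‑parts of the first two sums converge, as $|\pi|\to0$, to $\int_{0}^{t}X_{1}(s)g_{2}(s)\,ds+\int_{0}^{t}X_{2}(s)g_{1}(s)\,ds$. For the $dB^{H}$‑parts I would push the frozen factor $X_{i}(s_{k})$ inside the divergence integral by means of the product rule
\begin{equation*}
X_{1}(s_{k})\int_{s_{k}}^{s_{k+1}}f_{2}(r)\,dB^{H}_{r}=\int_{s_{k}}^{s_{k+1}}X_{1}(s_{k})f_{2}(r)\,dB^{H}_{r}+\int_{s_{k}}^{s_{k+1}}\mathbb{D}^{H}_{r}X_{1}(s_{k})\,f_{2}(r)\,dr,
\end{equation*}
which follows from the duality relation in the Definition together with the identity $\langle D^{H}_{\cdot}\Phi,u\rangle_{T}=\int_{0}^{T}\mathbb{D}^{H}_{r}\Phi\,u_{r}\,dr$ (a consequence of \eqref{11}--\eqref{12} and the evenness of $\phi$) applied to the smooth functional $\Phi=X_{1}(s_{k})$. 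Summing over $k$ and letting $|\pi|\to0$ yields $\int_{0}^{t}X_{1}(s)f_{2}(s)\,dB^{H}_{s}$ together with the $ds$‑correction terms recorded in the statement, and symmetrically with the indices $1$ and $2$ interchanged. It then remains to see that the ``cross'' sum $\sum_{k}\Delta_{k}X_{1}\,\Delta_{k}X_{2}$ is negligible: its $dr\times dr$ and $dr\times dB^{H}$ contributions tend to $0$ by elementary estimates, while for the $dB^{H}\times dB^{H}$ contribution Proposition~\ref{2} gives $E\big|\int_{s_{k}}^{s_{k+1}}f_{1}\,dB^{H}\big|\,\big|\int_{s_{k}}^{s_{k+1}}f_{2}\,dB^{H}\big|=O\big((s_{k+1}-s_{k})^{2H}\big)$, and $\sum_{k}(s_{k+1}-s_{k})^{2H}\le|\pi|^{2H-1}\,t\to0$ precisely because $H>\tfrac12$. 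Collecting the limits proves the identity for elementary integrands.

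\textbf{Step 2 (approximation).} For general $g_{i},f_{i}$ satisfying the hypotheses I would choose elementary $g_{i}^{m}\to g_{i}$ in $L^{2}(\Omega\times[0,T])$ and elementary $f_{i}^{m}\to f_{i}$ in $\mathbb{L}^{1,2}_{H}$, with corresponding processes $X_{i}^{m}$. By Proposition~\ref{2} the divergence integrals $\int_{0}^{t}f_{i}^{m}\,dB^{H}$ converge to $\int_{0}^{t}f_{i}\,dB^{H}$ in $L^{2}$, so $X_{i}^{m}(t)\to X_{i}(t)$ in $L^{2}$ uniformly in $t$; the assumed differentiability of $D^{H}_{t}f_{i}(s)$ lets one control $\mathbb{D}^{H}_{\cdot}X_{i}^{m}$ in the appropriate norm, so that $X_{i}^{m}f_{j}^{m}\to X_{i}f_{j}$ in $\mathbb{L}^{1,2}_{H}$ and the correction integrands converge in $L^{1}(\Omega\times[0,T])$. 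Each term of the Step~1 identity then passes to the limit --- the $dB^{H}$‑integrals by Proposition~\ref{2}, the $dr$‑integrals by $L^{1}$‑convergence --- which gives Proposition~\ref{3}.

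\textbf{Main obstacle.} The genuinely delicate point is the stochastic‑integral part of Step~1: since the divergence integral is not adapted, $\sum_{k}X_{i}(s_{k})\int_{s_{k}}^{s_{k+1}}f_{j}\,dB^{H}$ is \emph{not} a Riemann sum for $\int_{0}^{t}X_{i}f_{j}\,dB^{H}$, so one must carefully extract the $\mathbb{D}^{H}$‑correction, identify its exact form, and --- for the limiting argument --- verify that all the approximating products actually lie in $\mathbb{L}^{1,2}_{H}$, which is where the smoothness hypotheses on $f_{i}$ are used. Everything else is routine bookkeeping.
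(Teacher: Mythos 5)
The paper gives no proof of Proposition~\ref{3} at all --- it is quoted verbatim from Hu's memoir (Theorem 11.1 there) --- so there is no internal argument to compare yours with; your two-stage plan (telescoping over a partition, pushing the frozen factor into the divergence via $F\delta(u)=\delta(Fu)+\langle D^{H}_{\cdot}F,u\rangle_{T}$, killing the cross term by $\sum_{k}|\Delta_{k}|^{2H}\le|\pi|^{2H-1}t$, then approximating) is indeed the standard route to such formulas. But there is one point you must not wave through: the correction terms your own computation produces are $\int_{0}^{t}\mathbb{D}^{H}_{s}X_{1}(s)f_{2}(s)\,ds+\int_{0}^{t}\mathbb{D}^{H}_{s}X_{2}(s)f_{1}(s)\,ds$, i.e.\ they carry the $dB^{H}$-coefficients $f_{i}$, whereas the statement as printed carries the drifts $g_{i}$, and you assert without comment that your terms are ``the $ds$-correction terms recorded in the statement''. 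The printed version is in fact false: take $g_{1}=g_{2}=0$, $f_{1}=f_{2}=1$, so $X_{1}=X_{2}=B^{H}$; then the right-hand side with $g$-corrections reduces to $2\int_{0}^{t}B^{H}_{s}\,dB^{H}_{s}$, which has zero expectation by Proposition~\ref{2}, while $E[(B^{H}_{t})^{2}]=t^{2H}>0$. The formula with $f$-corrections is the correct one (it is what Hu proves, and it is the version the paper actually uses --- see the term $2\int_{t}^{T}e^{\beta s}\mathbb{D}^{H}_{s}\hat{Y}_{s}\hat{Z}_{s}\,ds$ in \eqref{25}, where $\hat{Z}$ is the $dB^{H}$-integrand); with deterministic $f_{i}$ the two correction integrals sum exactly to $\int_{0}^{t}\int_{0}^{t}\phi(u-v)f_{1}(u)f_{2}(v)\,du\,dv$, restoring the right expectation. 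So your derivation is aimed at the true identity, but as written you claim to have derived the misprinted one; you need to state explicitly that the last two integrals involve $f_{2},f_{1}$ and that the statement contains a typo, otherwise the ``proof of the statement'' proves a false formula.

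Apart from this, the genuinely incomplete part is Step 2. Under the weak hypotheses of the proposition, the claims that $X_{i}^{m}f_{j}^{m}\to X_{i}f_{j}$ in $\mathbb{L}^{1,2}_{H}$ and that $X_{i}f_{j}\in Dom(\delta)$ do not follow from $L^{2}$-convergence of the factors: products of $L^{2}$-convergent sequences need not converge in $L^{2}$, and one must control $D^{H}(X_{i}f_{j})=f_{j}D^{H}X_{i}+X_{i}D^{H}f_{j}$, hence mixed moments of $X_{i}$, $f_{j}$ and their Malliavin derivatives; this is precisely where the differentiability and integrability assumptions on $D^{H}_{t}f_{i}(s)$ must be used quantitatively (or a localization argument supplied), and at present it is asserted rather than proved. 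The same remark applies in Step 1 to the convergence of $\delta\big(\sum_{k}X_{1}(s_{k})f_{2}\mathbf{1}_{(s_{k},s_{k+1}]}\big)$ to $\int_{0}^{t}X_{1}(s)f_{2}(s)\,dB^{H}_{s}$, which should be checked in $\mathbb{L}^{1,2}_{H}$ (easy for elementary smooth integrands, but it should be said). The elementary estimates you give for the cross term and the duality-based product rule are fine.
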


\section{Existence and uniqueness theorem}

The existence and uniqueness of the fractional mean-field BSDEs are proved here.
For simplify the presentation, we only discuss the one dimensional case.
Let
\begin{equation}\label{33}
\eta_{t} = \eta_{0} + \int_0^t b_{s} ds + \int_0^t \sigma_{s} dB_{s}^{H},
\end{equation}
where $\eta_{0}$ is a constant, and $b$ and $\sigma$ are two deterministic differentiable functions,
such that $\sigma_{t} \neq 0$ (then either $\sigma_{t} < 0$ or $\sigma_{t} > 0$),  $t\in [0,T]$.
We recall that (see (\ref{11}))
\begin{equation*}
  \| \sigma \|_{t}^{2} = H(2H-1) \int_0^t \int_0^t |u-v|^{2H-2} \sigma_{u} \sigma_{v} dudv.
\end{equation*}
So $\frac{d}{dt}(\| \sigma \|_{t}^{2})= 2 \hat{\sigma}_{t} \sigma_{t}>0$ for $t\in (0,T]$, where
$\hat{\sigma}_{t} = \int_0^t \phi(t-v) \sigma_{v} dv$.

Now,
we denote the (non-completed) product space of $(\Omega,\mathcal{F},P)$ by $(\bar{\Omega},\bar{\mathcal{F}}, \bar{P})$ $ = (\Omega\times \Omega,\mathcal{F}\otimes \mathcal{F}, P\otimes P)$,
and denote the filtration of this product space by $\bar{\mathbb{F}} = \{ \bar{\mathcal{F}}_{t} = \mathcal{F} \otimes \mathcal{F}_{t}, 0\leq t\leq T \}$.
A random variable, originally defined on $\Omega$,
$\xi\in L^{0}(\Omega,\mathcal{F}, P;\mathbb{R})$ is canonically extended to $\bar{\Omega}$:
$\xi'(\omega',\omega)=\xi(\omega'), \ \ (\omega',\omega)\in \bar{\Omega} = \Omega\times \Omega$.
On the other hand, for every $\theta\in L^{1}(\bar{\Omega},\bar{\mathcal{F}}, \bar{P})$, the random variable
$\theta(\cdot,\omega):\Omega\rightarrow \mathbb{R}$ is in $L^{1}(\Omega,\mathcal{F}, P), \  P(d \omega), \ a.s.$,
and its expectation is denoted by
\begin{equation*}
  E'[\theta(\cdot,\omega)] = \int_{\Omega} \theta(\omega',\omega) P(d \omega').
\end{equation*}
Then we have $E'[\theta]=E'[\theta(\cdot,\omega)]\in L^{1}(\Omega,\mathcal{F}, P)$. In addition,
\begin{equation*}
 \bar{E}[\theta]\bigg(= \int_{\bar{\Omega}} \theta d \bar{P} = \int_{\Omega}  E'[\theta(\cdot,\omega)] P(d \omega) \bigg) = E\big[E'[\theta]\big].
\end{equation*}

Motivated by Buckdahn et al. \cite{Buckdahn,Buckdahn2}, we investigate the mean-field BSDEs driven by fBm as follows:
\begin{equation}\label{31}
  Y_{t} = \xi + \int_t^T E'[f(s,\eta_{s},Y'_{s},Z'_{s},Y_{s},Z_{s})] ds - \int_t^T Z_{s} dB_{s}^{H}, \ \ 0\leq t\leq T.
\end{equation}

\begin{remark}
Owing to our notation, we mark that the coefficient of BSDE (\ref{31}) is explained by:
\begin{equation*}
\begin{split}
     E'[f(s,\eta_{s},Y'_{s},Z'_{s},Y_{s},Z_{s})](\omega)
   =& E'[f(s,\eta_{s}(\omega),Y'_{s},Z'_{s},Y_{s}(\omega),Z_{s}(\omega))]\\
   =& \int_{\Omega} f(s,\eta_{s}(\omega),Y_{s}(\omega'),Z_{s}(\omega'),Y_{s}(\omega),Z_{s}(\omega)) P (d\omega').
\end{split}
\end{equation*}
\end{remark}
From the above remark, combining the definition of expectation, we have the following two special cases:
\begin{equation} \label{34}
 E'[f(s,Y'_{s},Z'_{s})]=E[f(s,Y_{s},Z_{s})], \ \  E'[f(s,\eta_{s},Y_{s},Z_{s})]=f(s,\eta_{s},Y_{s},Z_{s}).
\end{equation}

Before giving the definition of the solutions of BSDE (\ref{31}),
we introduce the following sets:

\begin{itemize}
  \item [$\bullet$]
   $C_{pol}^{1,3}([0,T]\times \mathbb{R})=\bigg\{ \varphi\in C^{1,3}([0, T] \times \mathbb{R}), \ and \
    \ all \ the \ derivatives \ of \  \varphi \ are \ polynomial \ growth \bigg\};$
  \item [$\bullet$]
  $\mathcal{V}_{[0,T]} = \bigg \{ Y=\varphi\big(\cdot,\eta(\cdot)\big) \mid \varphi \in C_{pol}^{1,3}([0,T]\times \mathbb{R})
  \  with \ \frac{\partial \varphi}{\partial t}  \in C_{pol}^{0,1}([0,T]\times \mathbb{R}), \ t\in[0,T] \bigg \}.$
\end{itemize}
 Moreover, by $\widetilde{\mathcal{V}}_{[0,T]}$ and $\widetilde{\mathcal{V}}_{[0,T]}^{H}$ denote the completion of $\mathcal{V}_{[0,T]}$ under the following norm respectively,
\begin{equation*}
  \| Y \| := \bigg(E\int_0^{T}  e^{\beta t}  |Y(t)|^{2} dt\bigg)^{\frac{1}{2}}, \ \ \ \
  \| Z \| := \bigg(E\int_0^{T} t^{2H-1} e^{\beta t} |Z(t)|^{2} dt\bigg)^{\frac{1}{2}},
\end{equation*}
where $\beta\geq 0$ is a constant.
It is easy to know  $\widetilde{\mathcal{V}}_{[0,T]}^{H} \subseteq \widetilde{\mathcal{V}}_{[0,T]} \subseteq L^{2}_{\mathcal{F}}(0,T;\mathbb{R})$.

\begin{definition}
We call $(Y, Z)$ a solution of BSDE (\ref{31}),
if they satisfy the following conditions:
\begin{itemize}
  \item [(i)]  $(Y,Z) \in \widetilde{\mathcal{V}}_{[0,T]}\times \widetilde{\mathcal{V}}^{H}_{[0,T]}$;
  \item [(ii)]
   $Y_{t} = \xi + \int_t^T E'[f(s,\eta_{s},Y'_{s},Z'_{s},Y_{s},Z_{s})] ds
          - \int_t^T Z_{s} dB_{s}^{H}, \  a.s.,  \ 0\leq t\leq T.$
\end{itemize}
\end{definition}

Next, we shall propose two different methods to prove the existence and uniqueness of Eq. (\ref{31}).

\subsection{The First Method}

In this subsection, the first method, introduced by Maticiuc and Nie \cite{Maticiuc}, is used to establish the existence and uniqueness of  Eq. (\ref{31}).
In order to find the solution of BSDE (\ref{31}), the following assumptions are needed.

\begin{itemize}
\item[(H1)] Suppose that $\xi = g(\eta_{T})$,
where $g\in C_{pol}^{1}( \mathbb{R})$;
\end{itemize}

\begin{itemize}
\item[(H2)]
Assume that for the coefficient $f=f(t,x,y',z',y,z):[0,T]\times \mathbb{R}^{5}\rightarrow \mathbb{R}$ with
 $f\in C_{pol}^{0,1}([0,T]\times \mathbb{R}^{5})$,  there is a constant $C\geq 0$ such that,
     for every $t\in [0,T]$, $x,y_{1},y_{2},z_{1},z_{2},y'_{1},$ $y'_{2},z'_{1},z'_{2} \in \mathbb{R}$, we have
\begin{align*}
      &|f(t,x,y'_{1},z'_{1},y_{1},z_{1}) - f(t,x,y'_{2},z'_{2},y_{2},z_{2})|\\
     &\leq C\big(|y'_{1}-y'_{2}| +|z'_{1}-z'_{2}| + |y_{1}-y_{2}| +|z_{1}-z_{2}| \big).
\end{align*}
\end{itemize}
For notational simplicity, we denote $f_{0}(t,x)=f_{0}(t,x,0,0,0,0)$.
\begin{theorem}
Under (H1) and (H2),  BSDE (\ref{31}) admits a unique solution.
  Moreover,
\begin{equation}\label{20}
      E \left( e^{\beta t}|Y_{t}|^{2} + \int_t^T e^{\beta s}s^{2H-1}|Z_{s}|^{2} ds\right)
 \leq R \Theta(t,T,K), \ \ t\in[0,T],
\end{equation}
where $R$ is a positive constant which may change line to line, and
\begin{equation*}
  \Theta(t,T,K) = E\bigg(e^{\beta T}|g(\eta_{T})|^{2}  + \int_t^T e^{\beta s}|f_{0}(s,\eta_{s})|^{2} ds \bigg).
\end{equation*}
\end{theorem}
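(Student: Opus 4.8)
The plan is to follow the method of Maticiuc and Nie \cite{Maticiuc}: reduce the fully coupled mean-field equation to a Picard iteration each of whose steps is a \emph{decoupled} fractional BSDE, solve every such step through an associated linear parabolic PDE, and then close the scheme by an a priori estimate absorbing the non-standard terms produced by the fractional It\^o calculus. The first step is thus to treat, for $h\in C^{0,1}_{pol}([0,T]\times\mathbb R)$, the auxiliary equation $Y_t=g(\eta_T)+\int_t^T h(s,\eta_s)\,ds-\int_t^T Z_s\,dB_s^H$ (the terminal value being of this form by (H1)). To it I associate the linear PDE
\[
\frac{\partial u}{\partial t}(t,x)+b_t\frac{\partial u}{\partial x}(t,x)+\hat\sigma_t\sigma_t\frac{\partial^2 u}{\partial x^2}(t,x)+h(t,x)=0,\qquad u(T,\cdot)=g.
\]
Because $\hat\sigma_t\sigma_t=\tfrac12\tfrac{d}{dt}\|\sigma\|_t^2>0$ on $(0,T]$, this is a backward parabolic equation, non-degenerate on $(0,T]$, and by standard theory (cf.\ Hu and Peng \cite{Hu}) it has a unique classical solution $u\in C^{1,3}_{pol}$ with $\partial_t u\in C^{0,1}_{pol}$. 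Then $Y_t:=u(t,\eta_t)$, $Z_t:=\sigma_t\,\partial_x u(t,\eta_t)$ lie in $\mathcal V_{[0,T]}\times\mathcal V^H_{[0,T]}$, and Proposition \ref{4} applied to $u(t,\eta_t)$ shows that $(Y,Z)$ solves the equation; moreover $D^H_r\eta_s=\sigma_r\mathbf{1}_{[0,s]}(r)$ yields the identity $\mathbb D^H_s Y_s=\hat\sigma_s\,\partial_x u(s,\eta_s)=(\hat\sigma_s/\sigma_s)Z_s$, which also gives uniqueness of $(Y,Z)$ in this class.

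For the a priori estimate I would apply Proposition \ref{3} to $e^{\beta s}|Y_s|^2$, for $(Y,Z)$ a solution of (\ref{31}). The characteristic feature of the fractional calculus is that the correction term is not $\int|Z|^2$ but one involving $\mathbb D^H_s Y_s$; substituting $\mathbb D^H_s Y_s=(\hat\sigma_s/\sigma_s)Z_s$ and using the two-sided bound $c_1 s^{2H-1}\le\hat\sigma_s/\sigma_s\le c_2 s^{2H-1}$ on $(0,T]$, this term becomes comparable to $\int_t^T e^{\beta s}s^{2H-1}|Z_s|^2\,ds$ — which is precisely why the weight $t^{2H-1}$ enters the norm on $Z$. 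Taking expectations kills the $dB^H$-integral (Proposition \ref{2}), and the surviving term $2E\int_t^T e^{\beta s}Y_s\,E'[f(s,\eta_s,Y'_s,Z'_s,Y_s,Z_s)]\,ds$ is estimated by (H2) together with the mean-field identities $E'[|Y'_s|]=E[|Y_s|]$, $E'[|Z'_s|]=E[|Z_s|]$; the cross terms are split by Young's inequality as $2C|Y_s||Z_s|\le\lambda s^{2H-1}|Z_s|^2+C^2\lambda^{-1}s^{-(2H-1)}|Y_s|^2$ (legitimate since $H<1$ makes $s^{-(2H-1)}$ integrable near $0$), and choosing $\beta$ large enough to absorb all remaining $|Y_s|^2$-terms into $\beta E\int_t^T e^{\beta s}|Y_s|^2\,ds$ delivers (\ref{20}).

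The solution of (\ref{31}) is then obtained by a fixed point argument on $\widetilde{\mathcal V}_{[0,T]}\times\widetilde{\mathcal V}^H_{[0,T]}$: define $\mathcal T(y,z)=(Y,Z)$ to be the solution, furnished by the first step, of the decoupled equation with driver $h_s:=E'[f(s,\eta_s,y'_s,z'_s,y_s,z_s)]$. For $(y,z)\in\mathcal V_{[0,T]}\times\mathcal V^H_{[0,T]}$ this driver is again of the form $\psi(s,\eta_s)$ with $\psi\in C^{0,1}_{pol}$, since $y_s,z_s$ are functionals of $\eta_s$ and the independent copy $\omega'$ is integrated out, so the first step indeed applies; the a priori estimate then extends $\mathcal T$ to the whole completion by density. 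Applying the a priori estimate to $\mathcal T(y^1,z^1)-\mathcal T(y^2,z^2)$ and using (H2) gives, for $\beta$ large, $\|\mathcal T(y^1,z^1)-\mathcal T(y^2,z^2)\|\le\tfrac12\|(y^1,z^1)-(y^2,z^2)\|$ in the product norm; Banach's fixed point theorem then yields the unique solution $(Y,Z)$, and (\ref{20}) follows by applying the a priori estimate to $(Y,Z)$ itself.

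The main obstacle is the fractional stochastic calculus. Since $B^H$ is not a semimartingale there is no quadratic variation, and the energy balance is governed by the correction term of Proposition \ref{3}; the whole argument closes only because on the class $\mathcal V_{[0,T]}$ the Malliavin-type derivative is explicitly tied to $Z$ through $\mathbb D^H_s Y_s=(\hat\sigma_s/\sigma_s)Z_s$, which also forces the $t^{2H-1}$-weighted norm on $Z$. The accompanying delicacy is the bookkeeping of weights — the degeneracy of $\hat\sigma_t$ at $t=0$, handled in the PDE by the intrinsic time $\|\sigma\|_t^2$ and in the estimates by the integrability of $s^{-(2H-1)}$, i.e.\ by $H<1$ — together with checking that the frozen driver stays within the class to which the PDE step applies and that the identity $\mathbb D^H_s Y_s=(\hat\sigma_s/\sigma_s)Z_s$ survives the passage to the limit in the completion.
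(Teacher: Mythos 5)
Your overall skeleton (decoupled auxiliary BSDE solved via the associated PDE, Picard map $I$, energy estimate via the fractional It\^o formula and the identity $\mathbb{D}^{H}_{s}Y_{s}=\frac{\hat\sigma_{s}}{\sigma_{s}}Z_{s}$) is the same as the paper's, but the step where you close the fixed point is wrong under (H2). You claim that applying the a priori estimate to $\mathcal T(y^{1},z^{1})-\mathcal T(y^{2},z^{2})$ and ``taking $\beta$ large'' gives a global contraction on $\widetilde{\mathcal V}_{[0,T]}\times\widetilde{\mathcal V}^{H}_{[0,T]}$. That is precisely the argument of the paper's \emph{second} method (Theorem \ref{30}), and it needs (H3), i.e.\ the Lipschitz constant in $z$ weighted by $t^{H-\frac12}$ so that it matches the $s^{2H-1}$-weighted norm on $Z$. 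Under (H2) the driver difference is only bounded by $C\big(|\hat y_{s}|+E|\hat y_{s}|+|\hat z_{s}|+E|\hat z_{s}|\big)$ with no weight on $\hat z$, so the cross term must be split as $2Ce^{\beta s}|\hat Y_{s}|\,|\hat z_{s}|\le \delta e^{\beta s}s^{2H-1}|\hat z_{s}|^{2}+\frac{C^{2}}{\delta}e^{\beta s}s^{-(2H-1)}|\hat Y_{s}|^{2}$; the resulting coefficient $s^{-(2H-1)}$ of $|\hat Y_{s}|^{2}$ is \emph{unbounded} near $s=0$ (integrability is irrelevant for absorption), so no finite $\beta$ can absorb it into $\beta E\int_{t}^{T}e^{\beta s}|\hat Y_{s}|^{2}ds$, and the map is not shown to be a contraction on the whole interval. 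This is exactly why the paper does something more delicate: it first bounds $x(t)=(e^{\beta t}E|\hat Y_{t}|^{2})^{1/2}$ pointwise by a Gronwall-type result (Lemma 20 of Maticiuc--Nie), converts this into bounds on $\int x^{2}ds$ and $\int s^{-(2H-1)}x^{2}ds$ whose constants are proportional to $T-t_{n}$ and $T^{2-2H}-t_{n}^{2-2H}$, and obtains a contraction only on a short terminal interval $[t_{n},T]$ (choosing $n$ large and $\delta$ small), after which the solution is pasted backwards over the subintervals. Without this small-interval device (or without strengthening (H2) to (H3)), your fixed-point step does not go through.

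The same weight mismatch infects your a priori estimate: after Young's inequality the surviving $|Y_{s}|^{2}$-coefficient is of order $1+C+C^{2}s^{-(2H-1)}$, again unbounded, so ``choosing $\beta$ large enough to absorb all remaining $|Y_{s}|^{2}$-terms'' is not legitimate. Here, however, the gap is easily repaired and is repaired differently in the paper: since $s^{-(2H-1)}$ is integrable on $[0,T]$ (as $H<1$), one applies Gronwall's inequality with this integrable singular coefficient to $Ee^{\beta t}|Y_{t}|^{2}$, which yields (\ref{20}) with a constant $R$ involving $\exp\{(1+4C)T+4C^{2}M\frac{T^{2-2H}}{2-2H}\}$; the estimate itself is fine, only your absorption mechanism is not. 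In short: the auxiliary-PDE step, the identity $\mathbb{D}^{H}_{s}Y_{s}=\frac{\hat\sigma_{s}}{\sigma_{s}}Z_{s}$ and the two-sided bound on $\hat\sigma_{s}/\sigma_{s}$ are used exactly as in the paper, but the ``large $\beta$'' closure must be replaced by Gronwall (for the estimate) and by the small-interval contraction with Lemma 20 of Maticiuc--Nie (for existence and uniqueness) as long as you only assume (H2).
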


\begin{proof}
For every $(y_{\cdot},z_{\cdot}) \in \widetilde{\mathcal{V}}_{[0,T]} \times \widetilde{\mathcal{V}}^{H}_{[0,T]}$,
consider the following simple BSDE:
\begin{equation}\label{32}
   Y_{t} = g(\eta_{T}) + \int_t^T E'[f(s,\eta_{s},y'_{s},z'_{s},y_{s},z_{s})] ds - \int_t^T Z_{s} dB_{s}^{H}, \ \ 0\leq t\leq T.
\end{equation}
From Proposition 17 of Maticiuc and Nie \cite{Maticiuc}, we know BSDE (\ref{32}) has a unique solution
$(Y_{\cdot},Z_{\cdot}) \in \widetilde{\mathcal{V}}_{[0,T]} \times \widetilde{\mathcal{V}}^{H}_{[0,T]}$.
Now define a mapping
$I:\widetilde{\mathcal{V}}_{[0,T]} \times \widetilde{\mathcal{V}}^{H}_{[0,T]}\longrightarrow \widetilde{\mathcal{V}}_{[0,T]} \times \widetilde{\mathcal{V}}^{H}_{[0,T]}$
such that $I[(y_{\cdot},z_{\cdot})]=(Y_{\cdot},Z_{\cdot})$.
Let $n\in \mathbb{N}$ and $t_{i}=\frac{i-1}{n}T, i=1,...,n+1$.
First we shall solve (\ref{31}) in $[t_{n},T]$.
In order to do this, we show $I$ is a contraction on $\widetilde{\mathcal{V}}_{[t_{n},T]} \times \widetilde{\mathcal{V}}^{H}_{[t_{n},T]}$.

For two arbitrary given elements $(y_{\cdot},z_{\cdot})$ and
 $(\overline{y}_{\cdot},\overline{z}_{\cdot})\in \widetilde{\mathcal{V}}_{[t_{n},T]} \times \widetilde{\mathcal{V}}^{H}_{[t_{n},T]}$,
let $(Y_{\cdot},Z_{\cdot})=I[(y_{\cdot},z_{\cdot})]$ and
$(\overline{Y}_{\cdot},\overline{Z}_{\cdot})=I[(\overline{y}_{\cdot},\overline{z}_{\cdot})]$.
We denote their differences  by
$(\hat{y}_{\cdot},\hat{z}_{\cdot})=(y_{\cdot}-\overline{y}_{\cdot},z_{\cdot}-\overline{z}_{\cdot})$ and
$(\hat{Y}_{\cdot},\hat{Z}_{\cdot})=(Y_{\cdot}-\overline{Y}_{\cdot},Z_{\cdot}-\overline{Z}_{\cdot})$.

By applying It\^{o} formula (Proposition \ref{3}), one has
\begin{align}
 &e^{\beta t}\hat{Y}_{t}^{2} + \beta \int_t^T e^{\beta s}\hat{Y}_{s}^{2} ds
     + 2\int_t^T e^{\beta s}\mathbb{D}_{s}^{H} \hat{Y}_{s}\hat{Z}_{s} ds
     +2\int_t^T e^{\beta s}\hat{Y}_{s}\hat{Z}_{s} dB_{s}^{H} \nonumber \\
 =&  2\int_t^T e^{\beta s}\hat{Y}_{s}
     E'[f(s,\eta_{s},y'_{s},z'_{s},y_{s},z_{s}) -
     f(s,\eta_{s},\overline{y}'_{s},\overline{z}'_{s},\overline{y}_{s},\overline{z}_{s})] ds. \label{25}
\end{align}
We know (see Hu and Peng \cite{Hu}, Maticiuc and Nie \cite{Maticiuc}) that $\mathbb{D}_{s}^{H} \hat{Y}_{s} = \frac{\hat{\sigma}_{s}}{\sigma_{s}} \hat{Z}_{s}$.
Moreover, by Remark 6 in Maticiuc and Nie \cite{Maticiuc}, there is a constant $M>0$ such that for every $t\in [0,T]$,
$\frac{t^{2H-1}}{M}\leq \frac{\hat{\sigma}_{t}}{\sigma_{t}}\leq M t^{2H-1}$.
So from (\ref{25}), note Proposition \ref{2}, we have
\begin{align}\label{35}
 &E\left(e^{\beta t}\hat{Y}_{t}^{2} + \beta \int_t^T e^{\beta s}\hat{Y}_{s}^{2} ds
     +  \frac{2}{M}\int_t^T e^{\beta s}s^{2H-1}\hat{Z}_{s}^{2} ds \right)\nonumber\\
 \leq&   2E\int_t^T e^{\beta s}\hat{Y}_{s}
     E'[f(s,\eta_{s},y'_{s},z'_{s},y_{s},z_{s}) -
     f(s,\eta_{s},\overline{y}'_{s},\overline{z}'_{s},\overline{y}_{s},\overline{z}_{s})] ds.
\end{align}
From assumption (H2), note (\ref{34}), we obtain
\begin{align}
 &E\left(e^{\beta t}\hat{Y}_{t}^{2} + \beta \int_t^T e^{\beta s}\hat{Y}_{s}^{2} ds
     +  \frac{2}{M}\int_t^T e^{\beta s}s^{2H-1}\hat{Z}_{s}^{2} ds \right)  \nonumber\\
\leq&  2C E\int_t^Te^{\beta s}|\hat{Y}_{s}|E'\big[|\hat{y}'_{s}|+|\hat{z}'_{s}| + |\hat{y}_{s}|+|\hat{z}_{s}|\big] ds  \nonumber \\
=&  2C E\int_t^Te^{\beta s}|\hat{Y}_{s}|\big[ E(|\hat{y}_{s}|+|\hat{z}_{s}|) + |\hat{y}_{s}|+|\hat{z}_{s}|\big] ds. \label{87}
\end{align}
Therefore by choosing $\beta\geq 1$, and using H\"{o}lder inequality and Jensen inequality, we get
\begin{align}
& E\left(e^{\beta t}\hat{Y}_{t}^{2} + \int_t^T e^{\beta s}\hat{Y}_{s}^{2} ds
     +  \frac{2}{M}\int_t^T e^{\beta s}s^{2H-1}\hat{Z}_{s}^{2} ds \right) \nonumber\\
\leq & 4C\int_t^T \big(e^{\beta s} E|\hat{Y}_{s}|^{2}\big)^{\frac{1}{2}}
\big[e^{\beta s} E(|\hat{y}_{s}| + |\hat{z}_{s}|)^{2}\big]^{\frac{1}{2}} ds. \label{83}
\end{align}
Denote $x(t)=(e^{\beta t} E|\hat{Y}_{t}|^{2})^{\frac{1}{2}}$. Then from (\ref{83}),
\begin{equation*}
  x(t)^{2}\leq  4C\int_t^T x(s)\bigg(\big[e^{\beta s}E(|\hat{y}_{s}|+|\hat{z}_{s}|)^{2}\big]^{\frac{1}{2}}\bigg) ds.
\end{equation*}
Applying Lemma 20 in Maticiuc and Nie \cite{Maticiuc} to the above inequality, one has
\begin{align*}
  x(t)\leq 2C\int_t^T \big[e^{\beta s} E(|\hat{y}_{s}| + |\hat{z}_{s}|)^{2}\big]^{\frac{1}{2}} ds.
\end{align*}
Therefore for $t\in [t_{n},T]$,
\begin{align*}
  x(t)^{2}\leq 4C^{2} \bigg(\int_{t_{n}}^T \big[e^{\beta s} E(|\hat{y}_{s}|^{2}
   + |\hat{z}_{s}|^{2})\big]^{\frac{1}{2}} ds\bigg)^{2}.
\end{align*}
Now we compute
\begin{align}
  \int_{t_{n}}^T x(s)^{2} ds
  \leq& 4C^{2}(T-t_{n}) \bigg(\int_{t_{n}}^T \big[e^{\beta s} E(|\hat{y}_{s}|^{2} + |\hat{z}_{s}|^{2})\big]^{\frac{1}{2}} ds\bigg)^{2}. \label{84}
\end{align}
From H\"{o}lder inequality,
\begin{align}
      &\bigg(\int_{t_{n}}^T \big[e^{\beta s} E(|\hat{y}_{s}|^{2}+|\hat{z}_{s}|^{2})\big]^{\frac{1}{2}}ds\bigg)^{2}\nonumber\\
 \leq& \bigg(\int_{t_{n}}^T \big[e^{\beta s} E|\hat{y}_{s}|^{2} \big]^{\frac{1}{2}} ds
      +\int_{t_{n}}^T \big[e^{\beta s} E|\hat{z}_{s}|^{2}\big]^{\frac{1}{2}}ds\bigg)^{2} \nonumber\\
 \leq& 2\bigg(\int_{t_{n}}^T \big[e^{\beta s} E|\hat{y}_{s}|^{2} \big]^{\frac{1}{2}} ds \bigg)^{2}
      +2\bigg(\int_{t_{n}}^T \big[\frac{1}{s^{2H-1}} \cdot e^{\beta s} s^{2H-1}E|\hat{z}_{s}|^{2}\big]^{\frac{1}{2}}ds\bigg)^{2} \nonumber\\
 \leq& 2(T-t_{n})\int_{t_{n}}^Te^{\beta s} E|\hat{y}_{s}|^{2} ds
      +\frac{2(T^{2-2H}-t_{n}^{2-2H})}{2-2H} \int_{t_{n}}^Te^{\beta s} s^{2H-1}E|\hat{z}_{s}|^{2} ds \nonumber\\
 \leq& \big[2(T-t_{n})+ \frac{T^{2-2H}-t_{n}^{2-2H}}{1-H}\big]
      E \int_{t_{n}}^{T} e^{\beta s}\big(|\hat{y}_{s}|^{2} + s^{2H-1}|\hat{z}_{s}|^{2}\big) ds.  \label{85}
\end{align}
Hence, from (\ref{84}) and (\ref{85}),
\begin{align}
  \int_{t_{n}}^T x(s)^{2} ds
  \leq& (T-t_{n})G\cdot
      E \int_{t_{n}}^{T} e^{\beta s}\big(|\hat{y}_{s}|^{2} + s^{2H-1}|\hat{z}_{s}|^{2}\big) ds,  \label{86}
\end{align}
where $G=4C^{2}\big[2(T-t_{n})+ \frac{T^{2-2H}-t_{n}^{2-2H}}{1-H}\big]$. Similarly,
\begin{align}
  \int_{t_{n}}^T \frac{1}{s^{2H-1}}x(s)^{2} ds
  \leq& \frac{T^{2-2H}-t_{n}^{2-2H}}{1-H}G\cdot
      E \int_{t_{n}}^{T} e^{\beta s}\big(|\hat{y}_{s}|^{2} + s^{2H-1}|\hat{z}_{s}|^{2}\big) ds.  \label{88}
\end{align}
From (\ref{87}) and the inequality $2ab\leq \frac{1}{\delta}a^{2} + \delta b^{2}$, where $\delta>0$ is a constant,
combining Jensen inequality, we deduce
\begin{align*}
 &E\left(\int_{t_{n}}^T e^{\beta s}\hat{Y}_{s}^{2} ds
     +  \frac{2}{M}\int_{t_{n}}^T e^{\beta s}s^{2H-1}\hat{Z}_{s}^{2} ds \right)\\
\leq&   2C E\int_{t_{n}}^T e^{\beta s} \bigg(\frac{1}{\delta}(1+\frac{1}{s^{2H-1}})|\hat{Y}_{s}|^{2}
        + \delta|\hat{y}_{s}|^{2} + \delta s^{2H-1}|\hat{z}_{s}|^{2}  \bigg) ds \nonumber \\
\leq&   \frac{2C}{\delta} E\int_{t_{n}}^T e^{\beta s}(1+\frac{1}{s^{2H-1}})|\hat{Y}_{s}|^{2}ds
        + 2C\delta E\int_{t_{n}}^T e^{\beta s} \bigg(|\hat{y}_{s}|^{2} +  s^{2H-1}|\hat{z}_{s}|^{2}  \bigg) ds.
\end{align*}
Applying the inequalities (\ref{86}) and (\ref{88}), and choosing $M\geq2$, one has
\begin{align*}
  E \int_{t_{n}}^{T} e^{\beta s}\big(|\hat{Y}_{s}|^{2} + s^{2H-1}|\hat{Z}_{s}|^{2}\big) ds
\leq \widetilde{G} E\int_{t_{n}}^{T} e^{\beta s}\big(|\hat{y}_{s}|^{2} + s^{2H-1}|\hat{z}_{s}|^{2}\big) ds,
\end{align*}
where
$$\widetilde{G}=\frac{CGM}{\delta}(T-t_{n}) + \frac{CGM}{\delta(1-H)}(T^{2-2H}-t_{n}^{2-2H}) + CM \delta.$$
Now, by choosing $\delta$ such that $ CM \delta<\frac{1}{4}$, and taking $n$ large enough such that
$$\frac{CGM}{\delta}(T-t_{n})<\frac{1}{4}, \ \ \frac{CGM}{\delta(1-H)}(T^{2-2H}-t_{n}^{2-2H}) <\frac{1}{4}.$$
Then we obtain
\begin{align*}
  E \int_{t_{n}}^{T} e^{\beta s}\big(|\hat{Y}_{s}|^{2} + s^{2H-1}|\hat{Z}_{s}|^{2}\big) ds
\leq \frac{3}{4} E\int_{t_{n}}^{T} e^{\beta s}\big(|\hat{y}_{s}|^{2} + s^{2H-1}|\hat{z}_{s}|^{2}\big) ds,
\end{align*}
Consequently, $I$ is a contraction on
$\widetilde{\mathcal{V}}_{[t_{n},T]} \times \widetilde{\mathcal{V}}^{H}_{[t_{n},T]}$.
Discussing as in the proof of Theorem 22 of Maticiuc and Nie \cite{Maticiuc},
we know BSDE (\ref{31}) admits a unique solution on $\widetilde{\mathcal{V}}_{[t_{n},T]} \times \widetilde{\mathcal{V}}_{[t_{n},T]}^{H}$.
Next, the procedure is to solve (\ref{31}) in $[t_{n-1},t_{n}].$
Repeating the above technique and discussion, we obtain that (\ref{31}) has a unique solution in
 $\widetilde{\mathcal{V}}_{[0,T]} \times \widetilde{\mathcal{V}}_{[0,T]}^{H}$.

Now we prove the estimate (\ref{20}).
Suppose $(Y,Z)$ is the solution of Eq. (\ref{31}).
Similarly to (\ref{35}), and from (H2), we obtain
\begin{align*}
 &E\left(e^{\beta t}Y_{t}^{2} + \beta \int_t^T e^{\beta s}Y_{s}^{2} ds
     +  \frac{2}{M}\int_t^T e^{\beta s}s^{2H-1}Z_{s}^{2} ds \right) \\
 \leq&  E\bigg(e^{\beta T}|g(\eta_{T})|^{2}
     +  2\int_t^T e^{\beta s}Y_{s} E'[f(s,\eta_{s},Y'_{s},Z'_{s},Y_{s},Z_{s})] ds\bigg) \nonumber\\
 \leq& E e^{\beta T}|g(\eta_{T})|^{2}+  E\int_t^T e^{\beta s} |f_{0}(s,\eta_{s})|^{2} ds \nonumber\\
     &+E\int_t^T \big(1+4C+\frac{4C^{2}M}{s^{2H-1}}\big)e^{\beta s} |Y_{s}|^{2} ds
     +   \frac{1}{M}E\int_t^T e^{\beta s} s^{2H-1}|Z_{s}|^{2} ds.
\end{align*}
Therefore
\begin{align}
 &E\left(e^{\beta t}Y_{t}^{2}
     +  \frac{1}{M}\int_t^T e^{\beta s}s^{2H-1}Z_{s}^{2} ds \right) \nonumber\\
 \leq& R \Theta(t,T,K) +
      E\int_t^T \big(1+4C+\frac{4C^{2}M}{s^{2H-1}}\big)e^{\beta s} |Y_{s}|^{2} ds. \label{22}
\end{align}
Then, by Gronwall inequality,
\begin{align*}
 Ee^{\beta t}Y_{t}^{2}
 \leq R \Theta(t,T,K)\exp\bigg\{ (1+4C)(T-t) + 4C^{2}M\frac{T^{2-2H}-t^{2-2H}}{2-2H} \bigg\}.
\end{align*}
Again from  (\ref{22}), we have
\begin{align*}
   E\int_t^T e^{\beta s}s^{2H-1}|Z_{s}|^{2} ds \leq R\Theta(t,T,K).
\end{align*}
Therefore the estimate (\ref{20}) is obtained.
This completes the proof.
\end{proof}

\subsection{The Second Method}

Here, we introduce another method to prove the existence and uniqueness of Eq. (\ref{31}).
It should be pointed out that this method is more convenient than the above one.
However, the price of doing so is that we should strengthen the condition of the coefficient $f$ with respect to $z$.

\begin{itemize}
  \item [(H3)]
  For the coefficient $f=f(t,x,y',z',y,z):[0,T]\times \mathbb{R}^{5}\rightarrow \mathbb{R}$ with
 $f\in C_{pol}^{0,1}([0,T]\times \mathbb{R}^{5})$,  there is a constant $C\geq 0$ such that,
     for every $t\in [0,T]$, $x,y_{1},y_{2},z_{1},z_{2},y'_{1},$ $y'_{2},z'_{1},z'_{2} \in \mathbb{R}$, we have
\begin{align}
      &|f(t,x,y'_{1},z'_{1},y_{1},z_{1}) - f(t,x,y'_{2},z'_{2},y_{2},z_{2})| \nonumber\\
      \leq& C\big(|y'_{1}-y'_{2}| + t^{H-\frac{1}{2}}|z'_{1}-z'_{2}| + |y_{1}-y_{2}| + t^{H-\frac{1}{2}}|z_{1}-z_{2}| \big).
      \nonumber
\end{align}
\end{itemize}

\begin{remark}\label{36}
Suppose $\alpha$ and $ \gamma$ are two square integrable, jointly measurable stochastic processes.
Then we can define for all $t\in [0,T]$, $x,y,z\in \mathbb{R}$,
\begin{equation*}
  f^{\alpha,\gamma}(t,x,y,z):= E'[f(t,x,\alpha'_{t},\gamma'_{t},y,z)]
   = \int_{\Omega} f(t,x,\alpha_{t}(\omega'),\gamma_{t}(\omega'),y,z) P (d\omega').
\end{equation*}
Indeed, due to the assumptions on the coefficient $f\in C_{pol}^{0,1}([0,T]\times \mathbb{R}^{5})$,
we have that $f^{\alpha,\gamma}\in C_{pol}^{0,1}([0,T]\times \mathbb{R}^{3})$.
In addition, with the same constant $C$ of assumption (H3), for every $t\in [0,T]$, $x,y_{1},y_{2},z_{1},z_{2} \in \mathbb{R}$, we have
\begin{equation*}
  |f^{\alpha,\gamma}(t,x,y_{1},z_{1}) - f^{\alpha,\gamma}(t,x,y_{2},z_{2})|\leq C\big(|y_{1}-y_{2}| + t^{H-\frac{1}{2}}|z_{1}-z_{2}| \big).
\end{equation*}
\end{remark}

\begin{theorem}\label{30}
Under (H1) and (H3), BSDE (\ref{31}) admits a unique solution.
\end{theorem}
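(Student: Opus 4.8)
The plan is to recast the mean-field equation (\ref{31}) as a fixed-point problem over \emph{standard} (non--mean-field) fractional BSDEs, using the construction of Remark \ref{36}, and then to apply the Banach fixed-point theorem directly on the whole interval $[0,T]$ with the $\beta$-weighted norms already introduced. This is exactly what makes the second method ``more convenient'': no partition $t_i$ of $[0,T]$ is required, only $\beta$ chosen large.

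First I would fix an arbitrary pair $(\alpha,\gamma)\in\widetilde{\mathcal V}_{[0,T]}\times\widetilde{\mathcal V}^{H}_{[0,T]}$ and consider the classical fractional BSDE
$$Y_t = g(\eta_T) + \int_t^T f^{\alpha,\gamma}(s,\eta_s,Y_s,Z_s)\,ds - \int_t^T Z_s\,dB_s^H,\qquad 0\le t\le T,$$
with $f^{\alpha,\gamma}$ as in Remark \ref{36}. By that remark $f^{\alpha,\gamma}\in C^{0,1}_{pol}([0,T]\times\mathbb R^{3})$ and it is Lipschitz in $(y,z)$ with the weight $s^{H-1/2}$ in front of $z$; together with (H1) these are precisely the hypotheses under which the existence and uniqueness result of Maticiuc and Nie (Proposition 17 and Theorem 22 in \cite{Maticiuc}) applies. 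Hence there is a unique $(Y,Z)\in\widetilde{\mathcal V}_{[0,T]}\times\widetilde{\mathcal V}^{H}_{[0,T]}$ solving it, and this defines a map $\Gamma(\alpha,\gamma)=(Y,Z)$ on $\widetilde{\mathcal V}_{[0,T]}\times\widetilde{\mathcal V}^{H}_{[0,T]}$; a fixed point of $\Gamma$ is, by the first identity in (\ref{34}), exactly a solution of (\ref{31}), and conversely.

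Next I would show that for $\beta$ large enough $\Gamma$ is a contraction for the norm $\|(Y,Z)\|_\beta^2:=E\int_0^T e^{\beta s}(|Y_s|^2+s^{2H-1}|Z_s|^2)\,ds$ (equivalently $\|Y\|^2+\|Z\|^2$). Taking $(\alpha^i,\gamma^i)$ with $(Y^i,Z^i)=\Gamma(\alpha^i,\gamma^i)$, $i=1,2$, writing $\hat{\ }$ for differences, applying the It\^o formula of Proposition \ref{3} to $e^{\beta t}\hat Y_t^2$, using $\mathbb D_s^H\hat Y_s=(\hat\sigma_s/\sigma_s)\hat Z_s$ with $\hat\sigma_s/\sigma_s\ge s^{2H-1}/M$, and taking expectations (the $dB^H$-term vanishing by Proposition \ref{2}) yields, exactly as in (\ref{35}),
$$E\!\left(e^{\beta t}\hat Y_t^2 + \beta\!\int_t^T\! e^{\beta s}\hat Y_s^2\,ds + \tfrac{2}{M}\!\int_t^T\! e^{\beta s}s^{2H-1}\hat Z_s^2\,ds\right)\le 2E\!\int_t^T\! e^{\beta s}|\hat Y_s|\,|\Delta f_s|\,ds,$$
where $\Delta f_s=f^{\alpha^1,\gamma^1}(s,\eta_s,Y^1_s,Z^1_s)-f^{\alpha^2,\gamma^2}(s,\eta_s,Y^2_s,Z^2_s)$. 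I would split $\Delta f_s$ as $[f^{\alpha^1,\gamma^1}(s,\eta_s,Y^1_s,Z^1_s)-f^{\alpha^1,\gamma^1}(s,\eta_s,Y^2_s,Z^2_s)]$, bounded by $C(|\hat Y_s|+s^{H-1/2}|\hat Z_s|)$ via the Lipschitz estimate of Remark \ref{36}, plus $[f^{\alpha^1,\gamma^1}-f^{\alpha^2,\gamma^2}](s,\eta_s,Y^2_s,Z^2_s)$, which by the definition of $f^{\alpha,\gamma}$ and (H3) is $\le C\,E'[|\hat\alpha'_s|+s^{H-1/2}|\hat\gamma'_s|]=C(E|\hat\alpha_s|+s^{H-1/2}E|\hat\gamma_s|)$. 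Then $2ab\le\varepsilon^{-1}a^2+\varepsilon b^2$ is applied: the factor $s^{H-1/2}$ turns $|\hat Y_s|\,s^{H-1/2}|\hat Z_s|$ into a quantity controlled by $\tfrac{1}{2\varepsilon}|\hat Y_s|^2+\tfrac{\varepsilon}{2}s^{2H-1}|\hat Z_s|^2$, so the $s^{2H-1}$-weighted $\hat Z^1$-term is absorbed into the left-hand side for $\varepsilon$ small (this is exactly where (H3) is used and why (H2) would not suffice), and Jensen's inequality replaces $(E|\hat\alpha_s|)^2,(E|\hat\gamma_s|)^2$ by $E|\hat\alpha_s|^2,E|\hat\gamma_s|^2$. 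Setting $t=0$ and dropping the nonnegative $\hat Y_0^2$ term leaves
$$(\beta-c_1)\,E\!\int_0^T\! e^{\beta s}\hat Y_s^2\,ds + \tfrac{1}{M}E\!\int_0^T\! e^{\beta s}s^{2H-1}\hat Z_s^2\,ds\le c_2\,E\!\int_0^T\! e^{\beta s}\big(|\hat\alpha_s|^2+s^{2H-1}|\hat\gamma_s|^2\big)\,ds$$
for constants $c_1,c_2$ depending only on $C,M$; choosing $\beta$ with $\min(\beta-c_1,1/M)\ge 2c_2$ gives $\|\Gamma(\alpha^1,\gamma^1)-\Gamma(\alpha^2,\gamma^2)\|_\beta^2\le\tfrac12\|(\alpha^1,\gamma^1)-(\alpha^2,\gamma^2)\|_\beta^2$. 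Since $\widetilde{\mathcal V}_{[0,T]}\times\widetilde{\mathcal V}^{H}_{[0,T]}$ is complete for this norm, Banach's theorem produces a unique fixed point, i.e.\ the unique solution of (\ref{31}).

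The main obstacle I expect is the $z'$-dependence, i.e.\ the mean field in the $Z$-variable: in the fractional It\^o calculus the cross term $\mathbb D_s^H\hat Y_s\hat Z_s$ only delivers $\hat Z$ controlled in the $s^{2H-1}$-weighted norm $\|\cdot\|$, so the Lipschitz bound of $f$ in $z'$ must carry a matching weight --- hence the factor $t^{H-1/2}$ in (H3) rather than a plain Lipschitz constant. A secondary technical point is to verify that $f^{\alpha,\gamma}$ is well defined, jointly measurable, and retains the $C^{0,1}_{pol}$ regularity and the stated Lipschitz bound when $(\alpha,\gamma)$ ranges over the completions $\widetilde{\mathcal V}_{[0,T]}\times\widetilde{\mathcal V}^{H}_{[0,T]}$ and not merely over the dense class $\mathcal V_{[0,T]}$; this is handled by an approximation argument together with Remark \ref{36}.
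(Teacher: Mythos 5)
Your proposal is correct, but it sets up the fixed point differently from the paper. The paper's map $I$ freezes \emph{all} four arguments $(y',z',y,z)$, so the intermediate equation (\ref{6}) has a driver that is a known $C_{pol}^{0,1}$ function of $(s,\eta_{s})$ alone; its solvability comes from the elementary Lemma \ref{37}, and the contraction follows at once by inserting the driver difference (which contains only $\hat{y},\hat{z}$) into the a priori estimate (\ref{39}), using (H3), Jensen's inequality and a large $\beta$. You instead freeze only the mean-field (primed) slots via $f^{\alpha,\gamma}$ of Remark \ref{36}, so your intermediate equation is a genuine Lipschitz fractional BSDE: its solvability needs the heavier Maticiuc--Nie theorem rather than Lemma \ref{37}, and your contraction must be re-derived through the It\^{o} computation of Proposition \ref{3} with $\mathbb{D}^{H}_{s}\hat{Y}_{s}=(\hat{\sigma}_{s}/\sigma_{s})\hat{Z}_{s}$, absorbing the $Cs^{H-\frac{1}{2}}|\hat{Z}_{s}|$ term into the $s^{2H-1}$-weighted norm, which is indeed exactly where (H3) (and not (H2)) is decisive, as you observe. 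Both routes work on the whole interval $[0,T]$ with a single large $\beta$ and no time partition; yours tracks the contraction only in the mean-field slots, in the spirit of the Buckdahn--Li--Peng iteration, while the paper's choice buys a shorter contraction argument (no absorption of $\hat{Z}$) at the price of a slightly larger map. Two minor points: in your final choice of constants, $c_{2}$ comes from the Young parameter $\varepsilon$, so you must first take $\varepsilon$ small relative to $1/M$ (say $\varepsilon\leq\frac{1}{2M}$) and only then choose $\beta$ large, rather than tuning $\beta$ alone; and your caveat about extending $f^{\alpha,\gamma}$ from $\mathcal{V}_{[0,T]}$ to the completions is at the same level of rigor as the paper's own Remark \ref{36}, so it does not constitute a gap relative to the paper.
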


We point out that in the proof of Theorem \ref{30}, the following result is used.

\begin{lemma}[Wen and Shi \cite{Wen}, Lemma 3.1]\label{37}
Suppose $g$ is a given differentiable function with polynomial growth, and $f(t,x)$ is a $C_{pol}^{0,1}$-continuous function.
Then the following BSDE:
\begin{equation*}\label{}
    Y_{t}=g(\eta_{T}) + \int_t^T f(s,\eta_{s}) ds - \int_t^T Z_{s} dB_{s}^{H}, \ \ t\in[0,T],
\end{equation*}
admits a unique solution
 $(Y_{\cdot},Z_{\cdot}) \in \widetilde{\mathcal{V}}_{[0,T]} \times \widetilde{\mathcal{V}}^{H}_{[0,T]}$.
 Moreover,
\begin{align}
    & E\left(e^{\beta t}|Y_{t}|^{2} + \frac{\beta}{2}\int_t^T e^{\beta s}|Y_{s}|^{2} ds + \frac{2}{M}\int_t^T s^{2H-1}e^{\beta s}|Z_{s}|^{2} ds\right) \nonumber\\
\leq& E\left(e^{\beta T}|g(\eta_{T})|^{2} + \frac{2}{\beta}\int_t^T e^{\beta s}|f(s,\eta_{s})|^{2} ds \right), \label{39}
\end{align}
where $M>0$ is a suitable constant and $\beta> 0$.
\end{lemma}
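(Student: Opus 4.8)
The plan is to solve this equation explicitly through the associated linear parabolic PDE, exploiting that the driver $f(s,\eta_s)$ and the terminal value $g(\eta_T)$ depend on the solution only through the Gaussian process $\eta$ of (\ref{33}), whose coefficients $b,\sigma$ are deterministic. I would look for a solution of the form $Y_t=u(t,\eta_t)$ for a deterministic $u$, and apply the Itô formula of Proposition \ref{4} to $u(t,\eta_t)$. Matching the $dB^H_t$-term and the $dt$-term against the differential form $dY_t=-f(t,\eta_t)\,dt+Z_t\,dB^H_t$ of the equation forces the representation $Z_t=\sigma_t\frac{\partial u}{\partial x}(t,\eta_t)$ together with the terminal-value problem
\begin{equation*}
\frac{\partial u}{\partial t}(t,x)+b_t\frac{\partial u}{\partial x}(t,x)+\hat\sigma_t\sigma_t\frac{\partial^2 u}{\partial x^2}(t,x)+f(t,x)=0,\qquad u(T,x)=g(x),
\end{equation*}
where I used $\frac{d}{dt}\|\sigma\|_t^2=2\hat\sigma_t\sigma_t$. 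Since $b,\sigma$ are deterministic and $\hat\sigma_t\sigma_t>0$ on $(0,T]$, this backward equation has deterministic coefficients and can be solved by a Feynman--Kac/heat-kernel representation, so that $u(t,x)$ is a Gaussian convolution of $g$ and of $f(s,\cdot)$.

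Next I would verify the regularity $u\in C^{1,3}_{pol}$ with $\frac{\partial u}{\partial t}\in C^{0,1}_{pol}$: the polynomial growth of $g$ and of $f(t,\cdot)$, combined with the smoothing of the Gaussian kernel and differentiation under the integral sign, yields the required smoothness and polynomial growth of all derivatives. Consequently $Y_t=u(t,\eta_t)\in\mathcal{V}_{[0,T]}\subseteq\widetilde{\mathcal V}_{[0,T]}$, while $Z_t=\sigma_t\frac{\partial u}{\partial x}(t,\eta_t)$ lies in $\widetilde{\mathcal V}^H_{[0,T]}$ (in particular in $\mathbb L^{1,2}_H$, so the divergence integral is well defined through Proposition \ref{2}). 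Feeding this pair back into Proposition \ref{4} confirms that $(Y,Z)$ solves the BSDE, which settles existence.

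For the estimate (\ref{39}) I would apply the product Itô formula of Proposition \ref{3} to $e^{\beta t}Y_t^2$ between $t$ and $T$, obtaining the energy identity
\begin{align*}
& e^{\beta t}Y_t^2+\beta\int_t^T e^{\beta s}Y_s^2\,ds+2\int_t^T e^{\beta s}\mathbb{D}^H_s Y_s\, Z_s\,ds+2\int_t^T e^{\beta s}Y_s Z_s\,dB^H_s\\
=\ & e^{\beta T}|g(\eta_T)|^2+2\int_t^T e^{\beta s}Y_s\, f(s,\eta_s)\,ds.
\end{align*}
The structural identity $\mathbb{D}^H_s Y_s=\frac{\hat\sigma_s}{\sigma_s}Z_s$ (which follows from $Z_s=\sigma_s\frac{\partial u}{\partial x}(s,\eta_s)$ and $\mathbb{D}^H_s Y_s=\frac{\partial u}{\partial x}(s,\eta_s)\,\hat\sigma_s$, and is recorded in Hu and Peng \cite{Hu} and Maticiuc and Nie \cite{Maticiuc}), together with the lower bound $\frac{\hat\sigma_s}{\sigma_s}\geq\frac{s^{2H-1}}{M}$ from Remark 6 of \cite{Maticiuc}, converts the Malliavin cross term into $\frac{2}{M}\int_t^T e^{\beta s}s^{2H-1}Z_s^2\,ds$. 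Taking expectations annihilates the $dB^H$-integral by Proposition \ref{2}, and Young's inequality $2Y_s f(s,\eta_s)\leq\frac{\beta}{2}Y_s^2+\frac{2}{\beta}|f(s,\eta_s)|^2$ absorbs the remaining cross term into the left side, producing exactly (\ref{39}).

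Uniqueness is then immediate: since the driver depends neither on $Y$ nor on $Z$, the difference $(\hat Y,\hat Z)$ of any two solutions solves the same equation with $g=0$ and $f=0$, whence the same energy identity (valid since $\mathbb{D}^H_s\hat Y_s=\frac{\hat\sigma_s}{\sigma_s}\hat Z_s$ holds for every solution in $\widetilde{\mathcal V}_{[0,T]}\times\widetilde{\mathcal V}^H_{[0,T]}$) and (\ref{39}) force $\hat Y\equiv0$ and $\int_t^T s^{2H-1}e^{\beta s}\hat Z_s^2\,ds\equiv0$, so $\hat Z\equiv0$ a.e. I expect the main obstacle to be the regularity step: one must confirm that the heat-kernel representation of $u$ genuinely belongs to $C^{1,3}_{pol}$ (so that Proposition \ref{4} is applicable) and that $Z=\sigma\,\frac{\partial u}{\partial x}(\cdot,\eta_\cdot)$ satisfies the Malliavin integrability required for membership in $\widetilde{\mathcal V}^H_{[0,T]}$; once these are in place, the estimate and uniqueness reduce to the routine energy computation above.
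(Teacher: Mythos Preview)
The paper does not supply a proof of this lemma at all: it is quoted verbatim as Lemma~3.1 of Wen and Shi~\cite{Wen}, and is invoked as a black box in the proof of Theorem~\ref{30}. There is therefore no in-paper argument to compare against.

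That said, your sketch is the standard route and is essentially what one finds in~\cite{Wen} (and earlier in Hu--Peng~\cite{Hu} and Maticiuc--Nie~\cite{Maticiuc}): construct the solution through the linear backward heat equation for $u$, read off $Y_t=u(t,\eta_t)$, $Z_t=\sigma_t u_x(t,\eta_t)$ via Proposition~\ref{4}, and then obtain the a~priori bound by applying Proposition~\ref{3} to $e^{\beta t}Y_t^2$, using $\mathbb{D}^H_sY_s=\frac{\hat\sigma_s}{\sigma_s}Z_s$, the comparison $\frac{\hat\sigma_s}{\sigma_s}\ge \frac{s^{2H-1}}{M}$, Proposition~\ref{2} to kill the stochastic integral in expectation, and Young's inequality $2Y_sf\le \frac{\beta}{2}Y_s^2+\frac{2}{\beta}f^2$. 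Uniqueness then follows from the same estimate with zero data. The only point where care is needed, as you flag, is the regularity check $u\in C^{1,3}_{pol}$ with $\partial_t u\in C^{0,1}_{pol}$, which is precisely what the hypotheses on $g$ and $f$ (polynomial growth, $C^{0,1}_{pol}$) are designed to guarantee through the Gaussian kernel representation.
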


\begin{proof}[Proof of Theorem \ref{30}]
For every $(y_{\cdot},z_{\cdot}) \in \widetilde{\mathcal{V}}_{[0,T]} \times \widetilde{\mathcal{V}}^{H}_{[0,T]}$,
we consider BSDE as follows:
\begin{equation}\label{6}
   Y_{t} = g(\eta_{T}) + \int_t^T E'[f(s,\eta_{s},y'_{s},z'_{s},y_{s},z_{s})] ds - \int_t^T Z_{s} dB_{s}^{H},
    \ \ 0\leq t\leq T.
\end{equation}
From Lemma \ref{37}, Eq. (\ref{6}) admits the unique solution
$(Y_{\cdot},Z_{\cdot}) \in \widetilde{\mathcal{V}}_{[0,T]} \times \widetilde{\mathcal{V}}^{H}_{[0,T]}$.
Once again, define
$I:\widetilde{\mathcal{V}}_{[0,T]} \times \widetilde{\mathcal{V}}^{H}_{[0,T]}\longrightarrow \widetilde{\mathcal{V}}_{[0,T]} \times \widetilde{\mathcal{V}}^{H}_{[0,T]}$
such that $I[(y(\cdot),z(\cdot))]=((Y(\cdot),Z(\cdot))$.
Now we directly show that $I$ is a contraction mapping on
$\widetilde{\mathcal{V}}_{[0,T]} \times \widetilde{\mathcal{V}}^{H}_{[0,T]}$.
For two arbitrary elements $(y_{1}(\cdot),z_{1}(\cdot))$ and
 $(y_{2}(\cdot),z_{2}(\cdot))\in \widetilde{\mathcal{V}}_{[0,T]} \times \widetilde{\mathcal{V}}^{H}_{[0,T]}$,
set $(Y_{1}(\cdot),Z_{1}(\cdot))=I[(y_{1}(\cdot),z_{1}(\cdot))]$ and $(Y_{2}(\cdot),Z_{2}(\cdot))=I[(y_{2}(\cdot),z_{2}(\cdot))]$.
Denote
$ (\hat{y}(\cdot),\hat{z}(\cdot))=(y_{1}(\cdot)-y_{2}(\cdot),z_{1}(\cdot)-z_{2}(\cdot))$,
$(\hat{Y}(\cdot),\hat{Z}(\cdot))=(Y_{1}(\cdot)-Y_{2}(\cdot),Z_{1}(\cdot)-Z_{2}(\cdot)).$
Then, from the estimate (\ref{39}), we have
\begin{equation*}
\begin{split}
     &E\int_0^T e^{\beta s}\bigg(\frac{\beta}{2}|\hat{Y}(s)|^{2}  + \frac{2}{M}s^{2H-1}|\hat{Z}(s)|^{2} \bigg)ds\\
\leq& \frac{2}{\beta}E\int_0^T e^{\beta s}
      \bigg(E'\big[f(s,\eta_{s},y'_{1}(s),z'_{1}(s),y_{1}(s),z_{1}(s)) \\
    &\ \ \ \ \ \ \ \ \ \ \ \ \ \ \ \ \ \ \ \ \ - f(s,\eta_{s},y'_{2}(s),z'_{2}(s),y_{2}(s),z_{2}(s))\big]\bigg)^{2} ds.
\end{split}
\end{equation*}
From assumption (H3) we obtain
\begin{equation*}
 \begin{split}
  &E'\big[f(s,\eta_{s},y'_{1}(s),z'_{1}(s),y_{1}(s),z_{1}(s))  - f(s,\eta_{s},y'_{2}(s),z'_{2}(s),y_{2}(s),z_{2}(s))\big]\\
\leq& CE'\big[ |\hat{y}'(s)| + s^{H-\frac{1}{2}}|\hat{z}'(s)| + |\hat{y}(s)| + s^{H-\frac{1}{2}}|\hat{z}(s)|  \big]\\
  =& C\big(E|\hat{y}(s)| + s^{H-\frac{1}{2}}E|\hat{z}(s)| + |\hat{y}(s)| + s^{H-\frac{1}{2}}|\hat{z}(s)| \big).\\
 \end{split}
\end{equation*}
Therefore, by Jensen inequality and Fubini theorem, one has
\begin{equation*}
\begin{split}
     &E\int_0^T e^{\beta s}\bigg(\frac{\beta}{2}|\hat{Y}(s)|^{2}  + \frac{2}{M}s^{2H-1}|\hat{Z}(s)|^{2} \bigg)ds\\
\leq& \frac{2C^{2}}{\beta} E\int_0^T e^{\beta s}
      \bigg( E|\hat{y}(s)| + s^{H-\frac{1}{2}}E|\hat{z}(s)| + |\hat{y}(s)| + s^{H-\frac{1}{2}}|\hat{z}(s)| \bigg)^{2} ds\\
\leq& \frac{16C^{2}}{\beta} E\int_0^T e^{\beta s}
      \bigg( |\hat{y}(s)|^{2} + s^{2H-1}|\hat{z}(s)|^{2} \bigg) ds.\\
\end{split}
\end{equation*}
In other words,
\begin{align*}
 &E\int_0^T e^{\beta s}\bigg(\frac{M\beta}{4}|\hat{Y}(s)|^{2}  + s^{2H-1}|\hat{Z}(s)|^{2} \bigg)ds\\
\leq &\frac{8MC^{2}}{\beta} E\int_0^T e^{\beta s}
      \bigg( |\hat{y}(s)|^{2} + s^{2H-1}|\hat{z}(s)|^{2} \bigg) ds.
\end{align*}
Thus, by taking $\beta= 16MC^{2} + \frac{4}{M}$, we get
\begin{equation*}
     E\int_0^{T} e^{\beta s}\bigg(|\hat{Y}(s)|^{2} + s^{2H-1}|\hat{Z}(s)|^{2} \bigg)ds
\leq \frac{1}{2} E\int_0^{T} e^{\beta s} \bigg(|\hat{y}(s)|^{2} + s^{2H-1}|\hat{z}(s)|^{2} \bigg) ds.
\end{equation*}
Therefore,
$I$ is a contraction mapping on $\widetilde{\mathcal{V}}_{[0,T]} \times \widetilde{\mathcal{V}}^{H}_{[0,T]}$.
Consequently, Eq. (\ref{31}) admits a unique solution
  $(Y_{\cdot},Z_{\cdot})\in \widetilde{\mathcal{V}}_{[0,T]} \times \widetilde{\mathcal{V}}^{H}_{[0,T]}$.
\end{proof}

\begin{remark}
Now, we make a comparison for the above two methods.
It is easy to see that (H2) is weaker than (H3).
So from the point of view of the condition, the first method is better than the second one.
On the other hand, thanks to the concise proof, the second method is convenient than the first method.
So from this point of view, the second method is better.
\end{remark}

\section{Comparison theorem}

In this section, we study a comparison theorem of the fractional mean-field BSDEs of the following form:
\begin{equation}\label{41}
 Y_{t} = g(\eta_{T}) + \int_t^T E'[f(s,\eta_{s},Y_{s}',Y_{s},Z_{s})] ds - \int_t^T Z_{s} dB_{s}^{H}, \ \ 0\leq t\leq T.
\end{equation}
Under (H1) and (H3), it is easy to know that the above equation admits a unique solution.
Here we use (H3), not (H2), because it is more convenient for the proof of the following comparison theorem.
\begin{theorem}\label{40}
For $i=1,2$, suppose $g_{i}$ satisfies (H1), and $f_{i}(t,x,y',y,z)$ and $\partial_{y} f_{i}(t,x,y',y,z)$ satisfy (H3),
for each $(t,x,y',y,z)\in[0,T]\times \mathbb{R}^{4}$.
Moreover, assume $f_{1}$ is increasing in $y'$.
Then, if $g_{1}(x)\leq g_{2}(x)$ and $f_{1}(t,x,y',y,z)\leq f_{2}(t,x,y',y,z)$ for each $(t,x,y',y,z)\in[0,T]\times \mathbb{R}^{4}$,
it holds that $Y_{1}(t)\leq Y_{2}(t)$ almost surely.
\end{theorem}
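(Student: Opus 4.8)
The strategy is to linearize the difference of the two equations, thereby reducing the claim to the sign of the solution of a \emph{linear} mean-field fractional BSDE, and then to settle that sign by combining a comparison principle for scalar linear fractional BSDEs with a fixed-point argument that removes the mean-field term. Set $\hat{Y}_t=Y_1(t)-Y_2(t)$, $\hat{Z}_t=Z_1(t)-Z_2(t)$ and $\hat{\xi}=g_1(\eta_T)-g_2(\eta_T)\le 0$. Subtracting the two equations of the form (\ref{41}) and telescoping through the intermediate arguments
\[
f_1(s,\eta_s,Y_1',Y_1,Z_1)\to f_1(s,\eta_s,Y_1',Y_2,Z_1)\to f_1(s,\eta_s,Y_1',Y_2,Z_2)\to f_1(s,\eta_s,Y_2',Y_2,Z_2)\to f_2(s,\eta_s,Y_2',Y_2,Z_2),
\]
the mean value theorem (using $f_1\in C^{0,1}_{pol}$, and that $\partial_y f_1$ satisfies (H3), which keeps the linearized coefficients smooth enough to place all equations below within the scope of Lemma \ref{37} and Theorem \ref{30}) produces $\mathcal{F}_s$-measurable coefficients $\alpha_s,\beta_s$ with $|\alpha_s|\le C$ and $|\beta_s|\le Cs^{H-\frac12}$, a coefficient $\gamma_s'$ with $0\le\gamma_s'\le C$ (nonnegativity being exactly the monotonicity of $f_1$ in $y'$), and a term $\rho_s=E'[f_1(s,\eta_s,Y_2',Y_2,Z_2)-f_2(s,\eta_s,Y_2',Y_2,Z_2)]\le 0$ (from $f_1\le f_2$), such that $(\hat{Y},\hat{Z})$ is the unique solution of
\[
\hat{Y}_t=\hat{\xi}+\int_t^T\Big(\alpha_s\hat{Y}_s+\beta_s\hat{Z}_s+E'[\gamma_s'\hat{Y}_s']+\rho_s\Big)\,ds-\int_t^T\hat{Z}_s\,dB_s^H .
\]
It then suffices to prove $\hat{Y}_t\le 0$ almost surely.

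To dispose of the mean-field term, I would run a fixed-point scheme on the closed convex set $\mathcal{K}=\{u\in\widetilde{\mathcal{V}}_{[0,T]}:\ u\le 0\ \text{a.e.}\}$. For $u\in\mathcal{K}$, let $(\hat{Y}^u,\hat{Z}^u)$ solve the \emph{scalar} linear fractional BSDE obtained by freezing the mean-field term,
\[
\hat{Y}^u_t=\hat{\xi}+\int_t^T\Big(\alpha_s\hat{Y}^u_s+\beta_s\hat{Z}^u_s+E'[\gamma_s'u_s]+\rho_s\Big)\,ds-\int_t^T\hat{Z}^u_s\,dB_s^H ,
\]
which is well posed by Lemma \ref{37}. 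Since $\gamma_s'\ge 0$ and $u\le 0$, the free term $E'[\gamma_s'u_s]+\rho_s$ is nonpositive; together with $\hat{\xi}\le 0$, the scalar comparison principle discussed below gives $\hat{Y}^u\le 0$, so $u\mapsto\hat{Y}^u$ maps $\mathcal{K}$ into itself. Rerunning the fixed-$\beta$ estimate from the proof of Theorem \ref{30} — now only in the $y$-variable, hence even simpler — shows that this map is a contraction on $\widetilde{\mathcal{V}}_{[0,T]}$ once $\beta$ is chosen large; its unique fixed point is $\hat{Y}$, and since $\mathcal{K}$ is closed, $\hat{Y}\in\mathcal{K}$, i.e.\ $Y_1(t)\le Y_2(t)$ a.s.

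The remaining — and, I expect, the only genuinely delicate — ingredient is the comparison principle for the scalar linear fractional BSDE: if $\hat{\xi}\le 0$ and the free term $\theta_s$ of $\hat{Y}_t=\hat{\xi}+\int_t^T(\alpha_s\hat{Y}_s+\beta_s\hat{Z}_s+\theta_s)\,ds-\int_t^T\hat{Z}_s\,dB_s^H$ satisfies $\theta_s\le 0$, then $\hat{Y}\le 0$. Unlike the Brownian case, the term $\beta_s\hat{Z}_s$ cannot be absorbed by a Girsanov change of measure, so I would instead exploit the Markovian structure: all data and coefficients are (limits of) functions of $(s,\eta_s)$, so by uniqueness $\hat{Y}_s=v(s,\eta_s)$ and $\hat{Z}_s=\sigma_s\partial_x v(s,\eta_s)$, and the It\^o formula (Proposition \ref{4}) identifies $v$ as the solution of a linear parabolic PDE with bounded coefficients, nonpositive terminal data and nonpositive source term; the classical maximum principle then yields $v\le 0$, hence $\hat{Y}\le 0$. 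An approximation step is needed to pass from the smooth coefficients required by Proposition \ref{4} to the $C^{0,1}_{pol}$/polynomial-growth coefficients produced by the linearization. Alternatively, this scalar comparison result may be quoted from the fractional BSDE literature (Hu and Peng \cite{Hu}, Maticiuc and Nie \cite{Maticiuc}).
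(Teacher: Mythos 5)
Your overall strategy --- linearize the difference of the two equations and reduce the claim to a sign result for a linear equation --- is the standard route in the Brownian case, but in this divergence-integral fractional framework it breaks down at exactly the point you yourself flag as delicate, and the gap is genuine rather than technical. The linearized coefficients $\alpha_s,\beta_s,\gamma'_s,\rho_s$ are built from the unknown solutions $Y_i,Z_i$, which are only elements of $\widetilde{\mathcal{V}}_{[0,T]}\times\widetilde{\mathcal{V}}^{H}_{[0,T]}$, i.e.\ $L^{2}$-limits of processes of the form $\varphi(s,\eta_s)$; they are neither deterministic nor regular functions of $(s,\eta_s)$. Yet every tool available in this setting is restricted to Markovian, smooth drivers: Lemma \ref{37} requires a driver $f(s,\eta_s)$ with $f$ a deterministic $C^{0,1}_{pol}$ function, Proposition \ref{4} (the It\^o formula you need for the PDE identification) requires deterministic integrands, and the quotable comparison results (Hu and Peng \cite{Hu}, Maticiuc and Nie \cite{Maticiuc}, and Theorem 12.3 of Hu, Ocone and Song \cite{Hu1}, which is what the paper actually invokes) are proved for generators that are deterministic functions $f(s,\eta_s,y,z)$ of $C^{0,1}_{pol}$ type --- none covers a linear fractional BSDE with merely measurable random coefficients, and without a Girsanov transformation the term $\beta_s\hat{Z}_s$ is precisely the obstruction. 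Consequently both the well-posedness of your frozen scalar equation (which you justify by Lemma \ref{37}, outside its scope) and the scalar comparison principle it feeds into are unsupported; the ``approximation step'' you mention in passing is the actual difficulty, since mollifying the coefficients changes the equation and the required stability statement again falls outside the quoted lemmas.

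The paper avoids this entirely by a monotone iteration that never leaves the Markovian class: set $\widetilde{Y}_0=Y_2$ and let $(\widetilde{Y}_n,\widetilde{Z}_n)$ solve the non-mean-field BSDE with terminal value $g_1(\eta_T)$ and driver $f_1^{\widetilde{Y}_{n-1}}(s,x,y,z)=E'[f_1(s,x,\widetilde{Y}'_{n-1}(s),y,z)]$, which by Remark \ref{36} is again a deterministic $C^{0,1}_{pol}$ generator satisfying (H3). At each step the comparison theorem of \cite{Hu1} applies ($g_1\leq g_2$, $f_1^{v}\leq f_2^{v}$, and monotonicity of $f_1$ in $y'$ gives $f_1^{\widetilde{Y}_n}\leq f_1^{\widetilde{Y}_{n-1}}$), producing the decreasing chain $Y_2=\widetilde{Y}_0\geq\widetilde{Y}_1\geq\cdots$; the a priori estimate (\ref{39}) then shows the sequence is Cauchy, its limit solves the mean-field equation with data $(g_1,f_1)$ and hence equals $Y_1$ by the uniqueness in Theorem \ref{30}, yielding $Y_1(t)\leq Y_2(t)$ a.s. If you wish to keep your linearization scheme, you would first have to prove a comparison theorem for linear fractional BSDEs with the random coefficients produced by your mean value theorem step; that is an extension of the known theory, not a citable fact, so as written your argument has a missing ingredient at its core.
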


\begin{proof}
For $i=1,2$, we define $f_{i}^{v}(s,x,y,z) = E'[f_{i}(s,x,v'_{s},y,z)]$.
By virtue of Remark \ref{36}, $f^{v}_{i}$ and $\partial_{y}f^{v}_{i}$ satisfy (H3), moreover,
$f^{v}_{1}\leq f^{v}_{2}$, and $f^{v}_{1}$ is increasing in $v$.

Let $\widetilde{Y}_{0}(\cdot)=Y_{2}(\cdot)$. We consider BSDE:
\begin{equation*}
 \widetilde{Y}_{1}(t)=g_{1}(\eta_{T}) + \int_t^T E'[f_{1}(s,\eta_{s},\widetilde{Y}'_{0}(s),\widetilde{Y}_{1}(s),\widetilde{Z}_{1}(s))] ds
   -\int_t^T \widetilde{Z}_{1}(s) dB_{s}^{H}, \ t\in[0,T].
\end{equation*}
By Theorem \ref{30}, the above equation admits a unique solution
$(\widetilde{Y}_{1}(\cdot),\widetilde{Z}_{1}(\cdot)) \in \widetilde{\mathcal{V}}_{[0,T]} \times \widetilde{\mathcal{V}}^{H}_{[0,T]}$.
Now since
\begin{equation*}
  \begin{cases}
   f_{1}^{\widetilde{Y}_{0}}(t,x,y,z) \leq  f_{2}^{\widetilde{Y}_{0}}(t,x,y,z), \ \ \forall (t,x,y,z)\in [0,T]\times \mathbb{R}^{3};\\
   g_{1}(x)\leq g_{2}(x), \ \ \forall x\in \mathbb{R},
  \end{cases}
\end{equation*}
from Theorem 12.3 of Hu et al. \cite{Hu1}, we deduce
\begin{equation*}
  \widetilde{Y}_{1}(t)\leq \widetilde{Y}_{0}(t)=Y_{2}(t), \ \ a.s.
\end{equation*}
Next, we consider the following BSDE:
\begin{equation*}
 \widetilde{Y}_{2}(t)=g_{1}(\eta_{T}) + \int_t^T E'[f_{1}(s,\eta_{s},\widetilde{Y}'_{1}(s),\widetilde{Y}_{2}(s),\widetilde{Z}_{2}(s))] ds
   -\int_t^T \widetilde{Z}_{2}(s) dB_{s}^{H}.
\end{equation*}
Let $(\widetilde{Y}_{2}(\cdot),\widetilde{Z}_{2}(\cdot)) \in \widetilde{\mathcal{V}}_{[0,T]} \times \widetilde{\mathcal{V}}^{H}_{[0,T]}$
be the unique solution of the above equation.
Thanks to that $f^{v}_{1}$ is increasing in $v$, one has
\begin{equation*}
f_{1}^{\widetilde{Y}_{1}}(t,x,y,z) \leq  f_{1}^{\widetilde{Y}_{0}}(t,x,y,z), \ \forall (t,x,y,z)\in [0,T]\times \mathbb{R}^{3}.\\
\end{equation*}
Therefore, similar to the above, we deduce
\begin{equation*}
   \widetilde{Y}_{2}(t)\leq \widetilde{Y}_{1}(t), \ \ a.s.
\end{equation*}
By induction, one can construct a sequence
$\{(\widetilde{Y}_{n}(\cdot),\widetilde{Z}_{n}(\cdot))\}_{n\geq 1} \subseteq \widetilde{\mathcal{V}}_{[0,T]} \times \widetilde{\mathcal{V}}^{H}_{[0,T]}$
such that for every $t\in[0,T]$,
\begin{equation*}
 \widetilde{Y}_{n}(t)=g_{1}(\eta_{T}) + \int_t^T E'[f_{1}(s,\eta_{s},\widetilde{Y}'_{n-1}(s),\widetilde{Y}_{n}(s),\widetilde{Z}_{n}(s))] ds
   -\int_t^T \widetilde{Z}_{n}(s) dB_{s}^{H}.
\end{equation*}
Similarly, we obtain
\begin{equation*}
  Y_{2}(t)= \widetilde{Y}_{0}(t)\geq \widetilde{Y}_{1}(t)\geq \widetilde{Y}_{2}(t)\geq
   \cdots \geq \widetilde{Y}_{n}(t)\geq \cdots,  \ \ a.s.
\end{equation*}
In the following, we show $\{(\widetilde{Y}_{n}(\cdot),\widetilde{Z}_{n}(\cdot))\}_{n\geq 1}$ is a Cauchy sequence. Denote
$\hat{Y}_{n}(t)=\widetilde{Y}_{n}(t)-\widetilde{Y}_{n-1}(t),$ and $\hat{Z}_{n}(t)=\widetilde{Z}_{n}(t)-\widetilde{Z}_{n-1}(t), n\geq 4$.
Then, from the estimate (\ref{39}), we have
\begin{equation*}
  \begin{split}
     & E\int_0^T e^{\beta s}\left(\frac{\beta}{2}|\hat{Y}_{n}(s)|^{2} +\frac{2}{M} s^{2H-1}|\hat{Z}_{n}(s)|^{2}\right) ds\\
 \leq& \frac{2}{\beta}E\int_0^T e^{\beta s}|f_{1}^{\widetilde{Y}_{n-1}}(s,\eta_{s},\widetilde{Y}_{n}(s),\widetilde{Z}_{n}(s))
      -f_{1}^{\widetilde{Y}_{n-2}}(s,\eta_{s},\widetilde{Y}_{n-1}(s),\widetilde{Z}_{n-1}(s))|^{2} ds.
  \end{split}
\end{equation*}
From assumption (H2), one has
\begin{equation*}
  \begin{split}
     & E\int_0^T e^{\beta s}\left(\frac{\beta}{2}|\hat{Y}_{n}(s)|^{2} +\frac{2}{M} s^{2H-1}|\hat{Z}_{n}(s)|^{2}\right) ds\\
 \leq& \frac{6C^{2}}{\beta}E\int_0^T e^{\beta s}\left( |\hat{Y}_{n}(s)|^{2} + s^{2H-1}|\hat{Z}_{n}(s)|^{2} + |\hat{Y}_{n-1}(s)|^{2} \right) ds.
  \end{split}
\end{equation*}
Let $\beta=12MC^{2}+\frac{4}{M}$, then we obtain
\begin{equation*}
  \begin{split}
     & E\int_0^T e^{\beta s}\left(|\hat{Y}_{n}(s)|^{2} + s^{2H-1}|\hat{Z}_{n}(s)|^{2}\right) ds\\
 \leq& \frac{1}{4}E\int_0^T e^{\beta s}\left( |\hat{Y}_{n}(s)|^{2} + s^{2H-1}|\hat{Z}_{n}(s)|^{2} + |\hat{Y}_{n-1}(s)|^{2} \right) ds.
  \end{split}
\end{equation*}
Hence
\begin{equation*}
  \begin{split}
     & E\int_0^T e^{\beta s}\left(|\hat{Y}_{n}(s)|^{2} + s^{2H-1}|\hat{Z}_{n}(s)|^{2}\right) ds\\
 \leq& \frac{1}{3}E\int_0^T e^{\beta s} |\hat{Y}_{n-1}(s)|^{2} ds\\
 \leq& \frac{1}{3}E\int_0^T e^{\beta s} \left(|\hat{Y}_{n-1}(s)|^{2} + s^{2H-1}|\hat{Z}_{n-1}(s)|^{2}\right) ds.
  \end{split}
\end{equation*}
Therefore
\begin{align*}
  & E\int_0^T e^{\beta s}\left(|\hat{Y}_{n}(s)|^{2} + s^{2H-1}|\hat{Z}_{n}(s)|^{2}\right) ds\\
 \leq& (\frac{1}{3})^{n-4}E\int_0^T e^{\beta s} \left(|\hat{Y}_{4}(s)|^{2} + s^{2H-1}|\hat{Z}_{4}(s)|^{2}\right) ds.
\end{align*}
So $(\hat{Y}_{n}(\cdot))_{n\geq 4}$ and $(\hat{Z}_{n}(\cdot))_{n\geq 4}$ are Cauchy sequences in
$\widetilde{\mathcal{V}}_{[0,T]}$ and  $\widetilde{\mathcal{V}}_{[0,T]}^{H}$, respectively.
Denote their limits by $\widetilde{Y}_{\cdot}$ and $\widetilde{Z}_{\cdot}$, respectively.
From Theorem \ref{30}, we have $\widetilde{Y}(t)= Y_{1}(t), \ \ a.s.$,
which deduce that
\begin{equation*}
 Y_{1}(t)\leq Y_{2}(t), \ \ a.s.
\end{equation*}
Therefore, our conclusion follows.
\end{proof}

\begin{example}
 Suppose we are facing with the following two mean-field BSDEs,
\begin{equation*}
 Y_{1}(t) = g_{1}(\eta_{T}) + \int_t^T [Y_{1}(s) + EY_{1}(s) + Z_{1}(s) -   1] ds - \int_t^T Z_{1}(s) dB_{s}^{H};
\end{equation*}
\begin{equation*}
 Y_{2}(t) = g_{2}(\eta_{T}) + \int_t^T [Y_{2}(s) + EY_{2}(s) + Z_{2}(s) + 1] ds - \int_t^T Z_{2}(s) dB_{s}^{H},
\end{equation*}
 where $t\in[0,T]$, $g_{1}$ and $g_{1}$ satisfy (H1) with $g_{1}(x)\leq g_{2}(x), \ \forall x\in \mathbb{R}$.
 Then, according to Theorem \ref{40}, one has
 \begin{equation*}
 Y_{1}(t)\leq Y_{2}(t), \ \ a.s.
\end{equation*}
\end{example}

\section{Connection with PDEs}

As an important application of the fractional mean-field BSDEs,
we connect the mean-field BSDEs driven by fBm with the following PDE:
\begin{equation}\label{51}
 \begin{cases}
 &v_{t}(t,x) +v_{x}(t,x)b_{t} + \frac{1}{2}v_{xx}(t,x)\widetilde{\sigma}_{t} \\
 &\ \ \ \ \ \ + E\big[f(t,x,v(t,\eta_{t}),v(t,x),v_{x}(t,x)\sigma_{t})\big]=0, \ \ \ (t,x)\in [0,T)\times \mathbb{R}; \\
 & v(T,x)=g(x), \ \ x\in \mathbb{R},
 \end{cases}
\end{equation}
where $\widetilde{\sigma}_{t}:=\frac{d}{dt}(\| \sigma\|_{t}^{2})$. From (\ref{34}), one has that
\begin{equation}\label{53}
  E\big[f(t,x,v(t,\eta_{t}),v(t,x),v_{x}(t,x)\sigma_{t})\big] = E'\big[f(t,x,v(t,\eta'_{t}),v(t,x),v_{x}(t,x)\sigma_{t})\big].
\end{equation}
It is easy to see PDE (\ref{51}) is a special case of Eq. (6.1) of Buckdahn et al. \cite{Buckdahn2}.
As presented in \cite{Buckdahn2}, PDE (\ref{51}) is nonlocal and under sufficient conditions,
it has a unique viscosity solution.
In other words, PDE (\ref{51}) is well-defined.

\begin{theorem}\label{10}
If PDE (\ref{51}) has a solution $v(t,x)$ with $v\in C^{1,2}([0,T]\times \mathbb{R})$,
then $(Y_{t},Z_{t}):=(v_{x}(t,\eta_{t}),$ $v_{x}(t,\eta_{t})\sigma_{t})$ satisfy the fractional mean-field BSDE:
\begin{equation}\label{52}
  Y_{t} = g(\eta_{T}) + \int_t^T E'\big[f(s,\eta_{s},Y'_{s},Y_{s},Z_{s})\big] ds - \int_t^T Z_{s} dB_{s}^{H}, \ \ 0\leq t\leq T.
\end{equation}
\end{theorem}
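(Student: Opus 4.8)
The plan is to verify directly that the pair $(Y_t, Z_t) := (v_x(t,\eta_t), v_x(t,\eta_t)\sigma_t)$ solves the terminal-value problem \eqref{52} by applying the fractional It\^o formula (Proposition \ref{4}) to the $C^{1,2}$ function $v_x$ along the process $\eta$, and then matching the resulting drift against the PDE \eqref{51}. Since $\eta_t = \eta_0 + \int_0^t b_s\,ds + \int_0^t \sigma_s\,dB_s^H$ with $b,\sigma$ deterministic and differentiable, Proposition \ref{4} applies to $F(t,x) = v(t,x)$ (and, after one differentiation in $x$, to $v_x$). The terminal condition is immediate: $Y_T = v_x(T,\eta_T)$; but actually one must be careful here — the BSDE \eqref{52} has terminal value $g(\eta_T)$, so I first need to clarify that the intended statement reads $Y_t = v(t,\eta_t)$ rather than $v_x$, or else that $g = v(T,\cdot)$ plays the role of the terminal data for $v$ itself. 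Reading \eqref{51}--\eqref{52} together, the natural object is $Y_t = v(t,\eta_t)$, $Z_t = v_x(t,\eta_t)\sigma_t$, with $Y_T = v(T,\eta_T) = g(\eta_T)$; I will proceed on that reading.

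The main computation: apply Proposition \ref{4} to $v(t,\eta_t)$ on $[t,T]$. This gives
\begin{align*}
v(T,\eta_T) - v(t,\eta_t) =& \int_t^T v_s(s,\eta_s)\,ds + \int_t^T v_x(s,\eta_s) b_s\,ds \\
& + \int_t^T v_x(s,\eta_s)\sigma_s\,dB_s^H + \frac{1}{2}\int_t^T v_{xx}(s,\eta_s)\Big[\tfrac{d}{ds}\|\sigma\|_s^2\Big]\,ds.
\end{align*}
Since $\widetilde\sigma_s = \frac{d}{ds}\|\sigma\|_s^2$, the PDE \eqref{51} evaluated at $(s,\eta_s)$ says exactly that $v_s(s,\eta_s) + v_x(s,\eta_s)b_s + \frac12 v_{xx}(s,\eta_s)\widetilde\sigma_s = -E\big[f(s,\eta_s, v(s,\eta_s), v(s,\eta_s), v_x(s,\eta_s)\sigma_s)\big]$ — here I must be attentive that the PDE's nonlocal term is $E[f(t,x,v(t,\eta_t),v(t,x),v_x(t,x)\sigma_t)]$, in which the first slot carries the ``frozen'' independent copy $\eta_t$ (integrated out by $E$) while the remaining slots carry the deterministic point $x$; substituting $x = \eta_s(\omega)$ and using \eqref{53} turns this into $E'[f(s,\eta_s',\,v(s,\eta_s),\,v(s,\eta_s),\,v_x(s,\eta_s)\sigma_s)] = E'[f(s,\eta_s',Y_s',Y_s,Z_s)]$ with $Y_s' = v(s,\eta_s')$, $Y_s = v(s,\eta_s)$, $Z_s = v_x(s,\eta_s)\sigma_s$. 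Substituting back and rearranging yields $v(t,\eta_t) = g(\eta_T) + \int_t^T E'[f(s,\eta_s,Y_s',Y_s,Z_s)]\,ds - \int_t^T v_x(s,\eta_s)\sigma_s\,dB_s^H$, which is \eqref{52}.

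Finally I would check the regularity/integrability bookkeeping: that $Z_s = v_x(s,\eta_s)\sigma_s$ lies in $\widetilde{\mathcal{V}}^H_{[0,T]}$ and $Y_s = v(s,\eta_s)$ in $\widetilde{\mathcal{V}}_{[0,T]}$ so that the divergence integral $\int_t^T Z_s\,dB_s^H$ is well-defined in the sense of Proposition \ref{2} and all the pathwise integrals above converge. This follows from $v \in C^{1,2}$ together with (implicitly) polynomial-growth control on $v$ and its derivatives inherited from $g \in C^1_{pol}$ and the structure of $f$; if the paper only assumes $v\in C^{1,2}$ without growth control, one restricts attention to the regime where the relevant moments of $\eta$ make these norms finite, exactly as in the definition of $\widetilde{\mathcal V}_{[0,T]}$. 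The one genuine subtlety — and the step I expect to need the most care — is the correct interpretation of the nonlocal term when passing from the PDE's expectation $E[\cdot]$ over the frozen copy $\eta_t$ to the BSDE's operator $E'[\cdot]$: one must keep straight which argument slots are ``randomized and integrated'' versus ``evaluated at the realized state $\eta_s(\omega)$,'' and invoke \eqref{53} precisely. Everything else is a direct application of Proposition \ref{4} and algebraic rearrangement.
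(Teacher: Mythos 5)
Your proof is correct and follows essentially the same route as the paper: apply the fractional It\^o formula (Proposition \ref{4}) to $v(t,\eta_t)$, use the PDE (\ref{51}) together with the identity (\ref{53}) to rewrite the nonlocal term as $E'[f(s,\eta_s,Y'_s,Y_s,Z_s)]$ with $Y'_s=v(s,\eta'_s)$, and rearrange to obtain (\ref{52}). Your reading that the pair should be $(Y_t,Z_t)=(v(t,\eta_t),v_x(t,\eta_t)\sigma_t)$ — with $v$ rather than $v_x$ in the first component — is exactly what the paper's own computation uses (the statement and the proof's final line carry the same misprint), so your clarification is justified rather than a deviation.
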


\begin{proof}
By applying
It\^{o} formula (Proposition \ref{4}), one has
\begin{equation*}
  \begin{split}
     dv(t,\eta_{t}) &= v_{t}(t,\eta_{t})dt
     +v_{x}(t,\eta_{t})b_{t}dt  +v_{x}(t,\eta_{t})\sigma_{t}dB_{t}^{H}
      + \frac{1}{2}v_{xx}(t,\eta_{t})\widetilde{\sigma}_{t} dt\\
      &= \left(v_{t}(t,\eta_{t}) +v_{x}(t,\eta_{t})b_{t} + \frac{1}{2}v_{xx}(t,\eta_{t})\widetilde{\sigma}_{t}\right) dt
         +v_{x}(t,\eta_{t})\sigma_{t} dB_{t}^{H}.
  \end{split}
\end{equation*}
Since $v$ satisfies PDE (\ref{51}), and note (\ref{53}), we have
\begin{equation*}
     dv(t,\eta_{t}) = E'\big[f(t,\eta_{t},v_{x}(t,\eta'_{t}),v(t,\eta_{t}),v_{x}(t,\eta_{t})\sigma_{t})\big] dt
         +v_{x}(t,\eta_{t})\sigma_{t} dB_{t}^{H}.
\end{equation*}
Therefore, $(Y_{t},Z_{t}):=(v_{x}(t,\eta_{t}),v_{x}(t,\eta_{t})\sigma_{t})$ satisfy the fractional mean-field BSDE (\ref{52}).
\end{proof}

\begin{remark}
Theorem \ref{10} establishes a relationship between the fractional mean-field BSDEs and PDEs,
 and extends Theorem 4.1 of Hu and Peng \cite{Hu} to mean-field circumstance.
 For the general result of the connection between the fractional mean-field BSDEs and PDEs, we will give some further studied
  in the future.
\end{remark}

\section{Conclusions}

The fractional mean-field BSDEs with Hurst parameter H $> \frac{1}{2}$ are studied in this article.
We first proposed two different methods to prove such equations admit a unique solution.
Then a comparison theorem for the solutions is obtained.
Finally, we established a relationship between this mean-field BSDE and a nonlocal PDE.
It should be pointed out that our results generalize part of the main results of Buckdahn et al. \cite{Buckdahn,Buckdahn2} to fractional calculus.
In the coming future researches, we would devote to establish the dual relation between fractional mean-field SDEs and BSDEs, and study the related stochastic optimal control problem.
The theory for the fractional mean-field BSDEs with  $H < \frac{1}{2}$ is anther goal.


\bibliographystyle{elsarticle-num}

\end{document}